\newtheorem{theorem}{Theorem}[section]
\newtheorem{lemma}[theorem]{Lemma}
\newtheorem{corollary}[theorem]{Corollary}
\newtheorem{proposition}[theorem]{Proposition}
\newtheorem{definition}[theorem]{Definition}
\definecolor{darkgreen}{rgb}{0,.66,0}
\newcommand\on[1]{\operatorname{#1}}
\newcommand\mc[1]{\mathcal{#1}}
\newcommand\ps[1]{\underline{#1}}
\newcommand\ld{\lambda}
\newcommand{\sa}{\on{sa}}
\newcommand{\op}{\on{op}}              
\newcommand{\hA}{\hat{A}}
\newcommand\eq[1]{(\ref{#1})}
\newcommand{\Sig}{\ps{\Sigma}}            
\newcommand\cN{\mc{N}}
\newcommand\cM{\mc{M}}
\newcommand\VN{\mc{V}(\cN)}
\newcommand\VM{\mc{V}(\cM)}
\newcommand\bbC{\mathbb{C}}
\newcommand\bbR{\mathbb{R}}
\newcommand\PN{\mc{P}(\cN)}
\newcommand\PM{\mc{P}(\cM)}
\newcommand\cA{\mc{A}}
\newcommand\cB{\mc{B}}
\newcommand\Ob[1]{\on{Ob}(#1)}
\newcommand\ra{\rightarrow}
\newcommand\mt{\mapsto}
\newcommand\lra{\longrightarrow}
\newcommand\lmt{\longmapsto}
\newcommand\hra{\hookrightarrow}
\newcommand\tphi{\tilde\phi}
\newcommand\SigB{\Sig^\cB}
\newcommand\SigN{\Sig^{\cN}}
\newcommand\SigM{\Sig^{\cM}}
\newcommand\cG{\mc G}
\newcommand\Aut{\on{Aut}}
\newcommand\ol[1]{\overline{#1}}
\newcommand\de{\delta}
\newcommand\io{\iota}
\newcommand\cJ{\mc J}
\newcommand\Ga{\Gamma}
\newcommand\pair[2]{\langle #1,#2\rangle}
\newcommand\cD{\mc D}
\newcommand\Pre[1]{\mathbf{Presh}(#1)}
\newcommand\KHaus{\mathbf{KHaus}}
\newcommand\JBW{\mathbf{JBW}}
\newcommand\JB{\mathbf{JB}}
\newcommand\tF{\tilde F}
\newcommand\Hom{\on{Hom}}
\newcommand\HStone{\mathbf{HStone}}
\begin{document}

\title{\textbf{Two New Complete Invariants\\of von Neumann Algebras}}

\author{Andreas D\"oring}

\date{19. November 2014}

\maketitle

\begin{abstract}
We show that the oriented context category and the oriented spectral presheaf are complete invariants of a von Neumann algebra not isomorphic to $\bbC\oplus\bbC$ and with no direct summand of type $I_2$.
\end{abstract}

\section{Introduction}
\subsection{Background and motivation}
The context category $\VM$ of a von Neumann algebra $\cM$ is the set of commutative von Neumann subalgebras of $\cM$, partially ordered by inclusion. The spectral presheaf $\SigM$ of a von Neumann algebra $\cM$ is a presheaf over $\VM$ and serves as a generalisation of the Gelfand spectrum to arbitrary von Neumann algebras.\footnote{Definitions are given in the next subsection.}. The spectral presheaf plays a key role in the topos approach to quantum theory \cite{DoeIsh11} Isham and Butterfield originally introduced the context category and the spectral presheaf in \cite{IshBut98} and, for von Neumann algebras, in $\cite{IHB00}$; the latter article is jointly with Hamilton. 

From a mathematical perspective, it is natural to ask whether the context category or the spectral presheaf are complete invariants of a von Neumann algebra. A first partial answer was given in \cite{DoeHar10}: let $\cM,\cN$ be von Neumann algebras not isomorphic to $\bbC\oplus\bbC$ and with no direct summands of type $I_2$. There is a bijective correspondence between order-isomorphisms $\tilde f:\VM\ra\VN$ of the context categories and Jordan $*$-isomorphisms $f:\cM\ra\cN$. Hence, the mere order structure on commutative subalgebras suffices to fix the Jordan algebraic structure of a von Neumann algebra. Subsequently, Hamhalter considered the case of $C^*$-algebras in \cite{Ham11}. In general, the context category of a $C^*$-algebra determines the algebra up to quasi-Jordan isomorphism. For certain $C^*$-algebras for which a generalised version of Gleason's theorem due to Bunce and Wright \cite{BunWri95} applies (showing that certain quasi-linear maps between quasi-Jordan algebras are linear), one recovers the Jordan structure.

In this article, we will only consider von Neumann algebras. Concerning these, the next step was taken in \cite{Doe12b}, where it was shown that there is a bijective correspondence between order-isomorphisms $\tilde f:\VM\ra\VN$ between the context categories of two von Neumann algebras and isomorphisms $\pair{\tilde f}{\cG_f}:\SigN\ra\SigN$ between their spectral presheaves (in the opposite direction). The latter isomorphisms are in a particular category of presheaves over varying base categories, see \cite{Doe12b} and definitions below. The result implies that there is a (contravariant) bijective correspondence between Jordan $*$-isomorphisms $f:\cM\ra\cN$ and isomorphisms $\pair{\tilde f}{\cG_f}:\SigN\ra\SigN$. The spectral presheaf determines a von Neumann algebra up to Jordan $*$-isomorphism, too.

Hence, neither the context category $\VM$ nor the spectral presheaf $\SigM$ are complete invariants. In this article, we will first show that the context category in fact determines a von Neumann algebra as a weakly closed Jordan algebra, i.e.,  as a $JBW$-algebra (Thm. \ref{Thm_OrderIsosGiveJBWIsos}). Then, drawing on the theory of orientations on $JB$-algebras and $JBW$-algebras \cite{Con74,AlfShu01,AlfShu03}, we will introduce the oriented context category and the oriented spectral presheaf and show that both are complete invariants of a von Neumann algebra not isomorphic to $\bbC\oplus\bbC$ and with no type $I_2$ summand (Thm. \ref{Thm_OrientedVMIsCompleteInvariant} respectively Thm. \ref{Thm_OrientedSigIsCompleteInvariant}).

\subsection{Definitions and preliminaries}
\begin{definition}
Let $\cM$ be a von Neumann algebra, and let $\VM$ be the set of unital commutative von Neumann subalgebras that have the same unit element as $\cM$. $\VM$, equipped with the partial order given by inclusion, is called the \textbf{context category of $\cM$}. The elements of $\VM$ are called \textbf{contexts}.
\end{definition}

\begin{definition}
The \textbf{spectral presheaf $\SigM$} of a von Neumann algebra $\cM$ is the presheaf over $\cM$ given
\begin{itemize}
	\item [(a)] on objects: for all $V\in\Ob{\VM}$, $\SigM_V:=\Sigma(V)$, the Gelfand spectrum of $V$,
	\item [(b)] on arrows: for all inclusions $i_{V'V}:V'\hra V$, $\SigM(i_{V'V}):\SigM_V\ra\SigM_{V'}$, $\ld\mt\ld|_{V'}$.
\end{itemize}
\end{definition}

\begin{definition}
A \textbf{$JB$-algebra $A$} is a real Jordan algebra which is also a Banach space such that
\begin{itemize}
	\item [(i)] $|a\cdot b|\leq|a||b|$,
	\item [(ii)] $|a^2|=|a|^2$,
	\item [(iii)] $|a^2|\leq|a^2+b^2|$.
\end{itemize}
\end{definition}

We will only consider unital $JB$-algebras. The canonical example of a $JB$-algebra is $(\cA_{\sa},\cdot)$, the self-adjoint part of a unital $C^*$-algebra $\cA$, equipped with the symmetrised product $a\cdot b=\frac{1}{2}(ab+ba)$. There is a category $\JB$ of unital $JB$-algebras, with unital norm-continuous Jordan homomorphisms as arrows.

\begin{definition}
A \textbf{$JBW$-algebra} is a unital $JB$-algebra that is the dual of a Banach space. 
\end{definition}

The canonical example is $(\cM_{\sa},\cdot)$ for a von Neumann algebra $\cM$. We say that $(\cM_{\sa},\cdot)$ is \textbf{the $JBW$-algebra associated with $\cM$}. There is a category $\JBW$ of $JBW$-algebras, with unital and normal, i.e., ultraweakly continuous Jordan homomorphisms as arrows.

$JB$-algebras and $JBW$-algebras are real algebras. They can be complexified, and the Jordan product can be extended canonically to the complexification $A+iA$. The complexification comes equipped with the involution $^*:A+iA\ra A+iA$, $a_1+ia_2\mt a_1-ia_2$. An arrow $f:A\ra B$ in the category $\JB$ can be extended uniquely to a unital, norm-continuous Jordan $*$-homomorphism $f:A+iA\ra B+iB$ between the complexifications. Similarly, an arrow $f:A\ra B$ in the category $\JBW$ can be extended uniquely to a unital, normal Jordan $*$-homomorphism between the complexifications.

\begin{definition}			\label{Def_CatOfPresheaves}
Let $\cD$ be a category. The category $\Pre\cD$ of $\cD$-valued presheaves has as its objects functors of the form $\ps P:\cJ\ra\cD^{\op}$, where $\cJ$ is a small category. Arrows are pairs
\begin{equation}
			\pair{H}{\io}: (\ps{\tilde P}:\tilde\cJ\ra\cD^{\op})\lra(\ps P:\cJ\ra\cD^{\op}),
\end{equation}
where $H:\cJ\ra\tilde\cJ$ is a functor and $\io:H^*\ps{\tilde P}\ra\ps P$ is a natural transformation in $(\cD^{\op})^{\cJ}$. Here, $H^*\ps{\tilde P}$ is the presheaf over $\cJ$ given by
\begin{equation}
			\forall J\in\cJ: H^*\ps{\tilde P}_J=\ps{\tilde P}_{H(J)}.
\end{equation}
Let $\ps P_i:\cJ_i\ra\cD^{\op}$, $i=1,2,3$, be three presheaves over different base categories. Given two composable arrows $\pair{H'}{\io'}:\ps P_3\ra\ps P_2$ and $\pair{H}{\io}:\ps P_2\ra\ps P_1$, the composite is $\pair{H'\circ H}{\io\circ\io'}:\ps P_3\ra\ps P_1$, where, for all $J\in\cJ_1$, the natural transformation $\io\circ\io'$ has components
\begin{equation}
			(\io\circ\io')_J=\io_J\circ\io'_{H(J)}:((H'\circ H)^*\ps P_3)_J=(\ps P_3)_{H'(H(J))} \ra (\ps P_2)_{H(J)} \ra (\ps P_1)_J.
\end{equation}
\end{definition}

Let $\HStone$ be the category of hyperstonean spaces and open continuous maps. The Gelfand spectrum of a commutative von Neumann algebra is a hyperstonean space, so the spectral presheaf of a von Neumann algebra is an object in the presheaf category $\Pre{\HStone}$.

\section{Contexts and Jordan structure}
As mentioned in the introduction, in \cite{DoeHar10} it was shown:
\begin{theorem}			\label{Thm_OrderIsosGiveJordanIsos}
Suppose $\cM,\cN$ are von Neumann algebras not isomorphic to $\bbC\oplus\bbC$ and without type $I_2$ summands. There is a bijective correspondence between order-isomorphisms $\tilde f:\VM\ra\VN$ and Jordan $*$-isomorphisms $f:\cM\ra\cN$ such that $\tilde f(V)$ is equal to the image $f[V]$ for all $V\in\VM$.
\end{theorem}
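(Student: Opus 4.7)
The forward direction is straightforward: given a Jordan $*$-iso $f:\cM\ra\cN$, the map $\tilde f(V):=f[V]$ is well-defined, because $f$ is unital, bijective and preserves commutativity, and yields an order-iso of the context categories with inverse $V'\mt f^{-1}[V']$. The heart of the theorem is the converse: given an order-iso $\tilde f:\VM\ra\VN$, one must construct a Jordan $*$-iso $f:\cM\ra\cN$ with $f[V]=\tilde f(V)$ for every $V\in\VM$.

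My strategy is to reconstruct the projection lattice $\PM$ with its orthostructure from the order on $\VM$ alone, transport it along $\tilde f$, and then apply Dye's theorem, which says that an orthoisomorphism $\PM\ra\PN$ of projection lattices of type-$I_2$-free von Neumann algebras lifts uniquely to a Jordan $*$-iso. The atoms of $\VM$ above $\bbC\cdot 1_\cM$ are exactly the two-dimensional subalgebras $\bbC\cdot 1+\bbC\cdot p$ with $p\in\PM$ nontrivial, each corresponding to an unordered pair $\{p,1-p\}$. More informatively, for each $V\in\VM$, the down-set $\downarrow V$ in $\VM$ is isomorphic to the poset of Boolean subalgebras of $\mc P(V)$; a classical reconstruction result for Boolean algebras (valid when the Boolean algebra has more than four elements) recovers $\mc P(V)$ from this poset up to Boolean isomorphism. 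Applying this inside $\VM$ and $\VN$ yields a Boolean isomorphism $\phi_V:\mc P(V)\ra\mc P(\tilde f(V))$ for each context $V$.

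Next I would verify coherence: for $V\subseteq W$ in $\VM$, the restriction $\phi_W|_{\mc P(V)}$ coincides with $\phi_V$, because both arise from the common action of $\tilde f$ on the atoms in $\downarrow V\subseteq\downarrow W$. This allows the family $(\phi_V)$ to be glued into a single bijection $\phi:\PM\ra\PN$. Orthogonality $p\perp q$ in $\PM$ is captured by the existence of some $V\ni p,q$ with $p\wedge q=0$ in $\mc P(V)$, a condition preserved by the $\phi_V$; hence $\phi$ is an orthoisomorphism. Dye's theorem then delivers the unique Jordan $*$-iso $f:\cM\ra\cN$ extending $\phi$, and $f[V]=\tilde f(V)$ holds by construction. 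That the two assignments $f\leftrightarrow\tilde f$ are mutually inverse then follows from the uniqueness clauses in Dye's theorem and in the Boolean reconstruction.

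The obstacles are precisely where the hypotheses of the theorem intervene. Boolean reconstruction from the subalgebra poset fails when $|\mc P(V)|\leq 4$, which is what forces the exclusion of $\cM\cong\bbC\oplus\bbC$, whose own projection lattice has only four elements. Dye's theorem fails in the presence of type $I_2$ summands, forcing the type-$I_2$-free hypothesis. The most delicate technical point is the coherence of the family $(\phi_V)_{V\in\VM}$: abstract Boolean reconstruction only yields isomorphisms up to Boolean automorphism of the target, so one must argue that the particular isomorphisms arising from $\tilde f$ are compatible with inclusions $V\subseteq W$, which in turn rests on $\tilde f$ being a single globally defined order-iso on $\VM$ rather than a collection of unrelated local ones.
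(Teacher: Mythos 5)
Your proposal follows essentially the same route as the paper, which defers to D\"oring--Harding \cite{DoeHar10} and sketches exactly this argument: recover the projection lattice with its orthostructure from the poset $\VM$ (via reconstruction of Boolean algebras from their subalgebra posets), transport it along $\tilde f$ to an isomorphism of complete orthomodular lattices $\PM\ra\PN$, and lift to a Jordan $*$-isomorphism by Dye's theorem, with the exclusions of $\bbC\oplus\bbC$ and of type $I_2$ summands entering precisely where you place them. There is no substantive divergence from the paper's approach.
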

Hence, the context category $\VM$ determines the von Neumann algebra $\cM$ up to Jordan $*$-isomorphism. 

The proof of the above theorem shows that there is a bijective correspondence between order-isomorphisms $\tilde f:\VM\ra\VN$ and isomorphisms of orthomodular lattices (OMLs) $\ol f:\PM\ra\PN$. Since $\PM,\PN$ are complete OMLs, $\ol f$ is in fact an isomorphism of complete OMLs. By a theorem by Dye \cite{Dye55}, $\ol f$ can be extended to a Jordan $*$-isomorphism $f:\VM\ra\VN$ such that $f(p)=\ol f(p)$ for all $p\in\PM$. Since $\ol f$ is an isomorphism of complete OMLs, it will preserve increasing nets of projections, so $f$ is a normal, i.e., ultraweakly continuous map. Taking topology into account, we obtain a slight strengthening of Thm. \ref{Thm_OrderIsosGiveJordanIsos}:

\begin{theorem}			\label{Thm_OrderIsosGiveJBWIsos}
Suppose $\cM,\cN$ are von Neumann algebras not isomorphic to $\bbC\oplus\bbC$ and without type $I_2$ summands. There is a bijective correspondence between order-isomorphisms $\tilde f:\VM\ra\VN$ and normal Jordan $*$-isomorphisms $f:(\cM_{\sa},\cdot)\ra(\cN_{\sa},\cdot)$ of real $JBW$-algebras, i.e., isomorphisms in $\JBW$. Moreover, $f$ can be extended uniquely to a normal Jordan $*$-isomorphism between the complex weakly closed Jordan $*$-algebras $(\cM,\cdot)$ and $(\cN,\cdot)$ such that $\tilde f(V)$ is equal to the image $f[V]$ for all $V\in\VM$.
\end{theorem}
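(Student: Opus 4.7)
The plan is to build on Theorem~\ref{Thm_OrderIsosGiveJordanIsos} by tracking the additional topological structure in its proof. That proof factors through the projection lattice: an order-isomorphism $\tilde f : \VM \to \VN$ is in natural bijection with an OML isomorphism $\ol f : \PM \to \PN$, since under the hypotheses excluding $\bbC \oplus \bbC$ and type $I_2$ summands every projection $p \in \PM \setminus \{0,1\}$ is encoded by the two-element context $\bbC p + \bbC(1-p) \in \VM$ and its position in the poset. Dye's theorem then extends $\ol f$ uniquely to a Jordan $*$-isomorphism $f : \cM \to \cN$ with $f|_{\PM} = \ol f$, and $\tilde f(V) = f[V]$ follows by construction.

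The new content is the normality of $f$. Since $\PM$ and $\PN$ are complete lattices and $\ol f$ is an order-isomorphism, $\ol f$ automatically preserves arbitrary joins and meets, in particular suprema of increasing nets of projections. A Jordan $*$-homomorphism between von Neumann algebras is normal iff it preserves suprema of bounded increasing nets of projections (via the spectral theorem and the fact that normal states are determined by their restriction to $\PM$), so $f$ is normal. Restriction to self-adjoint parts yields an isomorphism $f|_{\cM_{\sa}} : (\cM_{\sa}, \cdot) \to (\cN_{\sa}, \cdot)$ in $\JBW$, and the complexification of $f|_{\cM_{\sa}}$ recovers $f$ by uniqueness.

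For the inverse direction, any isomorphism $g$ in $\JBW$ between $(\cM_{\sa}, \cdot)$ and $(\cN_{\sa}, \cdot)$ extends uniquely to a normal Jordan $*$-isomorphism $\cM \to \cN$ of the complexifications; applying Theorem~\ref{Thm_OrderIsosGiveJordanIsos} to this extension produces an order-isomorphism $V \mapsto g[V]$ from $\VM$ to $\VN$. That this sends contexts to contexts uses that $V_{\sa}$ is an associative Jordan subalgebra of $(\cM_{\sa}, \cdot)$ and that associativity, $(a \cdot b) \cdot c = a \cdot (b \cdot c)$, is preserved by any Jordan isomorphism. The two assignments are mutually inverse by Theorem~\ref{Thm_OrderIsosGiveJordanIsos} together with uniqueness of the complexification.

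The only new obstacle beyond Theorem~\ref{Thm_OrderIsosGiveJordanIsos} is the upgrade from a complete OML isomorphism on $\PM$ to ultraweak continuity on all of $\cM$. This hinges on the spectral-theoretic approximation of bounded self-adjoint elements by real linear combinations of their spectral projections, together with the fact that the ultraweak topology on $\cM$ is generated by normal states, which in turn are determined (by countable additivity) by their restriction to the projection lattice $\PM$.
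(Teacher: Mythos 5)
Your proposal follows essentially the same route as the paper: factor the correspondence through the complete OML isomorphism $\ol f:\PM\ra\PN$ obtained from the proof of Theorem~\ref{Thm_OrderIsosGiveJordanIsos}, apply Dye's theorem to get the Jordan $*$-isomorphism, and deduce normality from the fact that a complete OML isomorphism preserves suprema of increasing nets of projections. Your added justifications (the two-element contexts $\bbC p+\bbC(1-p)$ encoding projections, and the spectral-theoretic argument that preservation of such suprema yields ultraweak continuity) are correct elaborations of steps the paper states more briefly.
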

Hence, the context category $\VM$ determines $\cM$ as a (complex) weakly closed Jordan $*$-algebra.

\section{Commutators and skew order derivations}
In order to obtain a complete invariant that determines a von Neumann algebra $\cM$ up to isomorphism, we need some extra information beyond the spectral presheaf or the context category. The noncommutative product in $\cM$ is not determined by the Jordan product. In particular, any von Neumann algebra $\cM$ (with product $(a,b)\mt ab$) and its opposite algebra $\cM^{\op}$ (with product $(a,b)\mt ba$) induce the same weakly closed Jordan $*$-algebra $(\cM,\cdot)$. There are von Neumann algebras that are not isomorphic to their opposite algebras, a deep result shown by Connes by providing a specific factor that is not anti-isomorphic to itself \cite{Con75}.

The symmetrised, i.e, Jordan product $a\cdot b=\frac{1}{2}(ab+ba)$ and the antisymmetrised, i.e., Lie product $\frac{1}{2}[a,b]=\frac{1}{2}(ab-ba)$ together determine the noncommutative product $ab$, since
\begin{align}
			ab= a \cdot b + \frac{1}{2}[a,b].
\end{align}

\begin{lemma}
Let $f:\cM\ra\cN$ be a normal unital Jordan $*$-homomorphism between von Neumann algebras. $f$ is a (normal unital) homomorphism of von Neumann algebras, i.e., preserves the associative, noncommutative product, if and only if $f$ preserves commutators, that is,
\begin{align}
			\forall a,b\in\cM: f([a,b]) = [f(a),f(b)].
\end{align}
If $f$ is a normal Jordan $*$-isomorphism preserving commutators, then $f$ is an isomorphism of von Neumann algebras.
\end{lemma}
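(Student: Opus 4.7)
The plan is to exploit the algebraic identity $ab = a\cdot b + \tfrac12[a,b]$, which decomposes the noncommutative product into its Jordan (symmetric) and Lie (antisymmetric) parts and holds for all $a,b$ in any unital associative algebra. The forward implication is then immediate: if $f$ preserves the associative product, then
\[
f([a,b]) = f(ab) - f(ba) = f(a)f(b) - f(b)f(a) = [f(a),f(b)].
\]

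For the converse, I would observe that, being a Jordan $*$-homomorphism, $f$ is $\bbC$-linear on the complexification and preserves the symmetrised product $a\cdot b = \tfrac12(ab+ba)$ for \emph{all} $a,b\in\cM$, not just self-adjoint ones. Combining this with the commutator-preservation hypothesis and $\bbC$-linearity gives
\[
f(ab) = f(a\cdot b) + \tfrac12 f([a,b]) = f(a)\cdot f(b) + \tfrac12[f(a),f(b)] = f(a)f(b),
\]
while unitality and normality carry over directly from the hypotheses, so $f$ is a normal unital homomorphism of von Neumann algebras.

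For the final sentence, I would note that a Jordan $*$-isomorphism is bijective with $\bbC$-linear inverse, so once $f(ab)=f(a)f(b)$ has been established, bijectivity forces $f^{-1}(cd)=f^{-1}(c)f^{-1}(d)$, and normality of $f^{-1}$ follows from bijectivity together with preservation of the positive cone. Hence $f$ is an isomorphism in $\vNa$.

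There is essentially no obstacle; the argument is purely algebraic and normality never enters the core computation, only serving to place the statement in $\vNa$. The lemma's interest is conceptual rather than technical: it isolates commutator-preservation as the precise extra datum one must impose on top of the $JBW$-structure already recovered by Thm.~\ref{Thm_OrderIsosGiveJBWIsos} in order to recover the full noncommutative von Neumann algebra, motivating the orientation-theoretic refinements that are to come.
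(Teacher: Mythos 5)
Your proof is correct and follows essentially the same route as the paper: both directions rest on the decomposition $ab = a\cdot b + \tfrac12[a,b]$ together with linearity and preservation of the Jordan product, exactly as in the paper's own argument. The extra remarks you supply (that a Jordan $*$-homomorphism preserves $a\cdot b$ for all, not just self-adjoint, elements, and the explicit treatment of the inverse in the isomorphism case) merely flesh out steps the paper declares ``clear.''
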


\begin{proof}
It is clear that if $f$ is a normal unital homomorphism of von Neumann algebras, then $f$ preserves commutators. Conversely, assume $f$ preserves commutators, then
\begin{align}
			f(ab) &= f(a\cdot b+\frac{1}{2}[a,b]) = f(a\cdot b)+\frac{1}{2}f([a,b])\\
			&= \frac{1}{2}(f(a)f(b)+f(b)f(a)+f(a)f(b)-f(b)f(a))\\
			&= f(a)f(b).
\end{align}
The statement about isomorphisms is clear.
\end{proof}

Since commutators are not given at the level of weakly closed Jordan $*$-algebras, we have to encode the information about the preservation of commutators in a different manner. In the following, we will draw heavily on the work by Connes \cite{Con74} and in particular Alfsen, and Shultz on orientations on operator algebras, see \cite{AlfShu98} and the books \cite{AlfShu01,AlfShu03} for details. 

\begin{definition}
An \textbf{order derivation $\de$} on a $JB$-algebra $A$ is a bounded linear operator such that $e^{t\de}(A^+)\subseteq A^+$ for all $t\in\bbR$, that is, $t\mt e^{t\de}$ is a one-parameter group of order automorphisms.

An order derivation $\de$ is \textbf{self-adjoint} if $\de=\de_{a}$ for some $a\in A$, where
\begin{align}
			\de_a:A &\lra A\\			\nonumber
			b &\lmt a\cdot b.
\end{align}
An order derivation $\de$ is \textbf{skew-adjoint} if $\de(1)=0$. The set of all order derivations on a $JB$-algebra $A$ is denoted $OD(A)$, and the set of skew order derivations is denoted $OD_s(A)$.
\end{definition}
Every order derivation $\de$ can be decomposed uniquely as the sum of a self-adjoint and a skew-adjoint order derivation.
\begin{lemma}			\label{Lemma_ede_iaGivesJordanAutoms}
(Alfsen/Shultz \cite{AlfShu98}) Let $A$ be a unital $JB$-algebra, and let $\de:A\ra A$ be an order derivation. The following are equivalent:
\begin{itemize}
	\item [(a)] $\de$ is skew,
	\item [(b)] $e^{t\de}$ is a Jordan automorphism for all $t\in\bbR$,
	\item [(c)] $\de$ is a derivation for the Jordan product.
\end{itemize}
\end{lemma}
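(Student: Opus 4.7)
The plan is to run the cycle (b) $\Rightarrow$ (c) $\Rightarrow$ (a) $\Rightarrow$ (b), since each of the first two directions is a one-line computation while the third packages the content of the lemma.

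For (b) $\Rightarrow$ (c), I would start from the identity $e^{t\de}(a\cdot b)=e^{t\de}(a)\cdot e^{t\de}(b)$ valid for all $t\in\bbR$ and all $a,b\in A$. Both sides are analytic functions of $t$ with values in the Banach algebra of bounded operators on $A$, so differentiating at $t=0$ is legitimate and yields $\de(a\cdot b)=\de(a)\cdot b+a\cdot\de(b)$, which is precisely the Leibniz rule for the Jordan product.

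For (c) $\Rightarrow$ (a), apply the Jordan-Leibniz rule to $1=1\cdot 1$ to get $\de(1)=\de(1)\cdot 1+1\cdot\de(1)=2\de(1)$, so $\de(1)=0$ and $\de$ is skew.

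The content of the lemma is in (a) $\Rightarrow$ (b). Assume $\de$ is a skew order derivation. Since $\de(1)=0$, the series $e^{t\de}(1)=\sum_{n\geq 0}\frac{t^n}{n!}\de^n(1)$ collapses to $1$, so $e^{t\de}$ is unital for every $t\in\bbR$. By the definition of an order derivation, $e^{t\de}(A^+)\subseteq A^+$, and since $(e^{t\de})^{-1}=e^{-t\de}$ has the same property, $e^{t\de}$ is an \emph{order automorphism} of $A$. At this point I would invoke the structural result, due to Alfsen--Shultz (see \cite{AlfShu01}), that a unital linear order automorphism of a unital $JB$-algebra is automatically a Jordan automorphism; applied to $e^{t\de}$ for each $t\in\bbR$, this gives (b).

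The main obstacle is this final step: neither the order structure alone nor the unitality alone forces the Jordan product to be preserved, and the bridge between the two is the Jordan analogue of Kadison's theorem characterising unital order isomorphisms as Jordan isomorphisms on the self-adjoint part of a $C^*$-algebra. Once that is granted as a black box from \cite{AlfShu01}, the rest of the argument is just differentiation and the Leibniz computation above.
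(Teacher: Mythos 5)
The paper states this lemma without proof, citing Alfsen--Shultz \cite{AlfShu98}, so there is no in-paper argument to compare against; judged on its own, your cyclic proof (b)$\Rightarrow$(c)$\Rightarrow$(a)$\Rightarrow$(b) is correct and in fact reproduces the argument of the cited source. The two easy legs are fine as written, and you correctly isolate the real content in (a)$\Rightarrow$(b): $\de(1)=0$ gives $e^{t\de}(1)=1$, invertibility with positive inverse $e^{-t\de}$ makes $e^{t\de}$ a unital order automorphism, and the Jordan analogue of Kadison's theorem (unital order isomorphisms of unital $JB$-algebras are Jordan isomorphisms, available in \cite{AlfShu01}) closes the loop. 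Citing that structural result as a black box is legitimate here; nothing is missing.
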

Let $A=\cA_{\sa}$ be the self-adjoint part of a unital $C^*$-algebra. For each $a\in\cA$ (not necessarily self-adjoint), define a linear operator
\begin{align}			\label{Def_OrderDerivsOnVNAs}
			\de_{a}:A &\lra A\\			\nonumber
			b &\lmt \frac{1}{2}(ab+ba^*).
\end{align}
Note that if $a\in\cA_{sa}$, then $\de_a(b)=a\circ b$, and $\de_{ia}(b)=\frac{1}{2}(iab-iba)=\frac{i}{2}[a,b]$.
\begin{proposition}
(Alfsen/Shultz \cite{AlfShu98}) If $A=(\cM_{\sa},\cdot)$ is the $JBW$-algebra associated with a self-adjoint part of a von Neumann algebra $\cM$, then the order derivations on $A$ are the operators $\de_{a}$ defined above (where $a\in\cN$). An order derivation is self-adjoint if and only if $\de=\de_{a}$ for some self-adjoint $a\in\cN_{\sa}$, and is skew-adjoint if and only if $\de=\de_{ia}=\frac{i}{2}[a,-]$ for some $a\in\cN_{\sa}$.
\end{proposition}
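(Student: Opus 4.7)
The plan is to verify by direct computation that each $\de_a$ is an order derivation, and then use the canonical self-adjoint/skew decomposition to reduce surjectivity of $a\mt\de_a$ to two structural theorems from the Alfsen--Shultz framework.

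First I would verify that $\de_a$ maps $\cM_{\sa}$ to itself and is an order derivation. For $b=b^*$,
\begin{align}
\de_a(b)^* = \tfrac12(b^*a^*+ab^*) = \tfrac12(ba^*+ab) = \de_a(b).
\end{align}
Writing $a=a_1+ia_2$ with $a_1,a_2\in\cM_{\sa}$, a short computation gives $\de_a=\de_{a_1}+\de_{ia_2}$ with $\de_{a_1}(b)=a_1\cdot b$ and $\de_{ia_2}(b)=\tfrac{i}{2}[a_2,b]$. The operator $\de_{a_1}$ is a self-adjoint order derivation by the very definition adopted in the paper. For $\de_{ia_2}$, one checks (by noting both sides satisfy the same ODE with the same initial condition) that $e^{t\de_{ia_2}}(b)=e^{ita_2/2}\,b\,e^{-ita_2/2}$; the right-hand side is unitary conjugation, hence a $*$-automorphism of $\cM$ preserving $A=(\cM_{\sa},\cdot)$ and in particular an order automorphism, while $\de_{ia_2}(1)=0$ shows it is skew. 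By uniqueness of the self-adjoint/skew decomposition, $\de_a$ is self-adjoint iff $a_2=0$ and skew iff $a_1=0$.

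For the converse, let $\de$ be any order derivation on $A$ and decompose it uniquely as $\de=\de_s+\de_k$ into its self-adjoint and skew parts. I would invoke the Alfsen--Shultz structural result that every self-adjoint order derivation on a $JBW$-algebra is Jordan multiplication by its value at the unit, so $\de_s=\de_c$ for $c:=\de_s(1)\in\cM_{\sa}$. For $\de_k$, Lemma~\ref{Lemma_ede_iaGivesJordanAutoms} says $\de_k$ is a Jordan derivation of $A$. The key input is then the theorem that every Jordan derivation of the $JBW$-algebra associated with a von Neumann algebra is inner in the form $\tfrac{i}{2}[d,-]$ for some $d\in\cM_{\sa}$; granting this, $\de_k=\de_{id}$ and hence $\de=\de_{c+id}=\de_a$ with $a:=c+id\in\cM$.

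The main obstacle is this inner-Jordan-derivation theorem for $\de_k$, which requires genuinely non-Jordan input. One natural route is to exponentiate $\de_k$ to a one-parameter group of Jordan $*$-automorphisms of $\cM$ (by Lemma~\ref{Lemma_ede_iaGivesJordanAutoms}), argue that connectedness to the identity forces these to be genuine $*$-automorphisms of $\cM$ rather than anti-automorphisms, and then apply a Kadison--Sakai-type implementation theorem to write the group as unitary conjugation by $u_t=e^{itd/2}$ with $d=d^*\in\cM$; differentiating at $t=0$ recovers $\de_k=\de_{id}$. It is precisely at this step that the associative structure of $\cM$ enters the argument beyond the $JBW$-structure of $A$, and this is where the characterisation ceases to be a purely Jordan-algebraic statement.
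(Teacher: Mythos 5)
The paper does not actually prove this proposition---it is quoted from Alfsen and Shultz---so your proposal can only be judged against the standard argument, which it reconstructs correctly in outline. Two points need tightening. In the forward direction you show that $\de_{a_1}$ and $\de_{ia_2}$ are separately order derivations and then implicitly use that their sum is again one; it is cleaner to verify in one stroke that
\begin{align}
e^{t\de_a}(b)=e^{ta/2}\,b\,\bigl(e^{ta/2}\bigr)^*,
\end{align}
since both sides solve $\dot x=\tfrac12(ax+xa^*)$ with the same initial value and the right-hand side is manifestly positivity-preserving; your decomposition $\de_a=\de_{a_1}+\de_{ia_2}$ together with uniqueness of the self-adjoint/skew splitting then identifies the two parts. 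The more substantive soft spot is the one you flag yourself: the claim that connectedness to the identity forces the Jordan $*$-automorphisms $e^{t\de_k}$ to be genuine $*$-automorphisms is not immediate, because by Kadison's theorem a Jordan $*$-automorphism of a von Neumann algebra is in general a central mixture of an automorphism and an anti-automorphism, and one must argue that the separating central projection cannot jump as $t$ varies. The standard (and cleaner) route, which is essentially what Alfsen and Shultz do, avoids this: complexify $\de_k$ to a Jordan derivation of $\cM$, invoke Sinclair's theorem that a Jordan derivation of a semisimple Banach algebra is an associative derivation, then Sakai's theorem that every derivation of a von Neumann algebra is inner, $\de_k=[h,-]$; since $\de_k$ commutes with the involution, $h+h^*$ is central and can be subtracted from $h$, leaving a skew-adjoint implementing element $h=\tfrac{i}{2}d$ with $d\in\cM_{\sa}$, i.e.\ $\de_k=\de_{id}$. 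With that substitution (or with a genuine continuity argument for the central projection) your proof is complete; as it stands, the anti-automorphism step is a gap in the sketch rather than a wrong turn.
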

This shows that the skew-adjoint order derivations on (self-adjoint parts of) von Neumann algebras encode commutators.
\begin{proposition}			\label{Prop_fVNAMorphismIfCommutesWithde_ia}
A normal unital Jordan homomorphism $f:\cM_{\sa}\ra\cN_{\sa}$ between the real parts of two von Neumann algebras extends to a (normal unital) homomorphism $f:\cM\ra\cN$ of von Neumann algebras if and only if
\begin{align}			\label{Cond_fCommutesWithde_ia}
			\forall a\in\cM_{\sa}: f\circ\de_{ia} = \de_{if(a)}\circ f.
\end{align}
If $f$ is a Jordan isomorphism such that this condition holds, then $f$ extends to an isomorphism of von Neumann algebras.
\end{proposition}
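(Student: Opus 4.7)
The plan is to reduce condition \eqref{Cond_fCommutesWithde_ia} to the commutator-preservation condition of the preceding lemma, and then invoke that lemma. The key identity to exploit is the one pointed out right before the proposition: for self-adjoint $a$, $\de_{ia}(b)=\frac{1}{2}(iab-iba)=\frac{i}{2}[a,b]$, and for $b\in\cM_{\sa}$ this value $\frac{i}{2}[a,b]$ is again self-adjoint, so $\de_{ia}$ restricts to a map $\cM_{\sa}\to\cM_{\sa}$ on which the equation makes sense.

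First I would extend $f:\cM_{\sa}\ra\cN_{\sa}$ uniquely to a normal unital Jordan $*$-homomorphism $f:\cM\ra\cN$ between the complexifications, as recorded in the preliminaries. By complex linearity of this extension, for all $a,b\in\cM_{\sa}$,
\begin{align}
f\bigl(\tfrac{i}{2}[a,b]\bigr) = \tfrac{i}{2}\,f([a,b]),
\qquad
\de_{if(a)}(f(b)) = \tfrac{i}{2}[f(a),f(b)].
\end{align}
Hence condition \eqref{Cond_fCommutesWithde_ia} evaluated on $b\in\cM_{\sa}$ is equivalent to $f([a,b])=[f(a),f(b)]$ for all $a,b\in\cM_{\sa}$.

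Next I would bootstrap commutator-preservation from $\cM_{\sa}$ to all of $\cM$. Any $x,y\in\cM$ decompose as $x=a_1+ia_2$, $y=b_1+ib_2$ with $a_j,b_j\in\cM_{\sa}$, and the commutator is $\bbR$-bilinear, so
\begin{align}
[x,y] = [a_1,b_1] - [a_2,b_2] + i[a_1,b_2] + i[a_2,b_1].
\end{align}
Applying the $\bbC$-linear extension of $f$ term by term, and using commutator-preservation on each pair of self-adjoint arguments, one gets $f([x,y])=[f(x),f(y)]$ for arbitrary $x,y\in\cM$. At this point the preceding lemma applies directly and yields that $f$ is a normal unital homomorphism of von Neumann algebras. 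Conversely, any such homomorphism automatically preserves the noncommutative product, hence commutators, hence $\de_{ia}$, giving \eqref{Cond_fCommutesWithde_ia}. The isomorphism claim follows because a bijective Jordan $*$-homomorphism between self-adjoint parts extends to a bijective Jordan $*$-homomorphism of the complexifications, and if both $f$ and $f^{-1}$ preserve the associative product then $f$ is an isomorphism of von Neumann algebras.

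There is no serious obstacle here: once one recognises that $\de_{ia}$ is essentially the commutator with $a$ up to the factor $i/2$, the proposition reduces to the earlier lemma by a short linearity argument. The only thing worth being careful about is that $f$, as originally given, is defined only on the real Jordan part, so the equivalence of \eqref{Cond_fCommutesWithde_ia} with commutator preservation must be carried out using the (unique) complex extension in order to cancel the factor $i/2$ and to make sense of $[f(a),f(b)]$ as an element of $\cN$ rather than $\cN_{\sa}$.
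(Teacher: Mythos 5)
Your proposal is correct and follows essentially the same route as the paper: identify condition \eq{Cond_fCommutesWithde_ia} with preservation of commutators of self-adjoint elements, extend to all of $\cM$ via the decomposition $c=a_1+ia_2$ and bilinearity, and conclude via the preceding lemma on commutator-preserving Jordan $*$-homomorphisms. Your treatment is slightly more explicit than the paper's (which compresses the bootstrapping step into ``it follows easily''), but there is no substantive difference.
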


\begin{proof}
If $f$ is a normal unital Jordan homomorphism such that \eq{Cond_fCommutesWithde_ia} holds, then we have
\begin{align}
			\forall a,b\in\cM_{\sa}: f(\frac{i}{2}[a,b])=(f\circ\de_{ia})(b)=(\de_{if(a)}\circ f)(b)=\frac{i}{2}[f(a),f(b)],
\end{align}
so $f$ preserves all commutators between self-adjoint operators. Since any operator $c\in\cM$ can be decomposed uniquely as $c=a_1+ia_2$ for self-adjoint $a_1,a_2\in\cM_{\sa}$, it follows easily that $f$ preserves all commutators, and hence is a normal unital homomorphism $f:\cM\ra\cN$ of von Neumann algebras.

Conversely, if $f:\cM\ra\cN$ is a homomorphism of von Neumann algebras, then its restriction to $\cM_{\sa}$ is a Jordan homomorphism to $\cN_{\sa}$ such that condition \eq{Cond_fCommutesWithde_ia} holds. The statement about isomorphisms is obvious.
\end{proof}

Let $\Aut_{\JBW}(\cM_{\sa})$ denote the group of normal Jordan automorphisms, i.e., automorphisms in the category $\JBW$, of $(\cM_{\sa},\cdot)$. By Lemma \ref{Lemma_ede_iaGivesJordanAutoms}, each skew order derivation $\de_{ia}$ ($a\in\cM_{\sa}$) generates a one-parameter group of Jordan automorphisms
\begin{align}
			\tF: \bbR &\lra \Aut_{\JBW}(\cM_{sa})\\			\nonumber
			t &\lmt e^{t\de_{ia}}.
\end{align}
Conversely, if $e^{t\de}$ is a one-parameter group of Jordan automorphisms, then $\de$ is skew.

Using the expansion of $e^{t\de_{ia}}$ (and $e^{t\de_{if(a)}}$) into an exponential series, it is easy to check that, for all $a\in\cM_{\sa}$,
\begin{align}
			f\circ\de_{ia} = \de_{if(a)}\circ f\quad\text{ iff }\quad\forall t\in\bbR: f\circ e^{t\de_{ia}}=e^{t\de_{if(a)}}\circ f.
\end{align}
Hence, we obtain
\begin{corollary}			\label{Cor_fvNaHomomIfffCommutesWithOneParamGroupsOfJordanAutoms}
A normal unital Jordan homomorphism $f:\cM_{\sa}\ra\cN_{\sa}$ between the real parts of two von Neumann algebras extends to a normal unital homomorphism $f:\cM\ra\cN$ of von Neumann algebras if and only if
\begin{align}			\label{Cond_fCommutesWithetde_ia}
			\forall a\in\cM_{\sa}\;\forall t\in\bbR: f\circ e^{t\de_{ia}}=e^{t\de_{if(a)}}\circ f.
\end{align}
If $f$ is a Jordan isomorphism such that this condition holds, then $f$ extends to an isomorphism of von Neumann algebras.
\end{corollary}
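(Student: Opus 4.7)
The plan is to obtain this corollary as a direct consequence of Proposition \ref{Prop_fVNAMorphismIfCommutesWithde_ia} together with the equivalence between the infinitesimal intertwining condition $f\circ\de_{ia}=\de_{if(a)}\circ f$ and the exponentiated intertwining condition $f\circ e^{t\de_{ia}}=e^{t\de_{if(a)}}\circ f$ (for all $t\in\bbR$) asserted in the paragraph immediately preceding the corollary. Once this equivalence is established, chaining it with Proposition \ref{Prop_fVNAMorphismIfCommutesWithde_ia} yields the biconditional, and the claim about isomorphisms carries over from the proposition without any additional work, since a bijective normal Jordan $*$-isomorphism of self-adjoint parts extends uniquely to the complexification and the extension produced by the proposition is automatically bijective.

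For the forward direction of the equivalence, I would fix $a\in\cM_{\sa}$ and iterate the relation $f\circ\de_{ia}=\de_{if(a)}\circ f$ to obtain $f\circ\de_{ia}^n=\de_{if(a)}^n\circ f$ for every $n\in\bbN$ by induction. A normal unital Jordan homomorphism between $JB$-algebras is automatically bounded, and the exponential series $\sum_{n\geq 0}(t^n/n!)\de^n$ converges in operator norm for any bounded order derivation $\de$, so linearity and norm-continuity of $f$ allow me to move $f$ past the sum:
\begin{align*}
f\circ e^{t\de_{ia}}
 = f\circ\sum_{n\geq 0}\frac{t^n}{n!}\de_{ia}^n
 = \sum_{n\geq 0}\frac{t^n}{n!}\de_{if(a)}^n\circ f
 = e^{t\de_{if(a)}}\circ f.
\end{align*}
For the reverse direction, I would differentiate $f\circ e^{t\de_{ia}}=e^{t\de_{if(a)}}\circ f$ at $t=0$, using $\frac{d}{dt}\big|_{t=0}e^{t\de}=\de$ (norm-convergence of the series again justifies termwise differentiation), which recovers $f\circ\de_{ia}=\de_{if(a)}\circ f$.

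There is no genuine obstacle: the only substantive point is the interchange of $f$ with the exponential series, which reduces to norm-continuity of $f$ and operator-norm convergence of the series, both of which are standard. All the algebraic content is already supplied by Proposition \ref{Prop_fVNAMorphismIfCommutesWithde_ia}, so the corollary really is bookkeeping in the form of re-expressing the infinitesimal condition in its equivalent global form.
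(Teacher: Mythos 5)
Your proposal is correct and follows essentially the same route as the paper: the paper likewise derives the corollary from Proposition \ref{Prop_fVNAMorphismIfCommutesWithde_ia} by noting (via the exponential series expansion of $e^{t\de_{ia}}$ and $e^{t\de_{if(a)}}$) that the infinitesimal intertwining condition is equivalent to the exponentiated one, a step the paper labels ``easy to check'' and which you have simply spelled out (induction on powers, norm-continuity of $f$ to pass through the series, differentiation at $t=0$ for the converse).
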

Note that for every $t$, the maps $f\circ e^{t\de_{ia}}$ and $e^{t\de_{if(a)}}\circ f$ are Jordan homomorphisms from $\cM_{\sa}$ to $\cN_{\sa}$. They are also normal, since the one-parameter groups of Jordan homomorphisms generated by skew order derivations are in fact one-parameter groups of \emph{inner} automorphisms:
\begin{lemma}			\label{Lemma_SkewOrderDerivsGiveInnerAutoms}
(Alfsen/Shultz) If $\de_{ia}:\cM_{\sa}\ra\cM_{\sa}$ is a skew order derivation and $(\tF(t)=e^{t\de_{i\hA}})_{t\in\bbR}$ is the corresponding one-parameter group of Jordan $*$-automorphisms of $\cM$, then
\begin{align}
			\de_{ia}=\frac{d}{dt}(\tF(t))|_{t=0}
\end{align}
and
\begin{align}			
			\forall t\in\bbR: \tF(t)=e^{t\de_{ia}}:\cM &\lra \cM\\
			b &\lmt e^{\frac{i}{2}ta}b e^{-\frac{i}{2}ta}=u_{\frac{t}{2}} b u_{-\frac{t}{2}}.
\end{align}
\end{lemma}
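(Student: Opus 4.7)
The plan is to write down the candidate one-parameter group $\Phi(t)(b) := u_{t/2}\,b\,u_{-t/2}$ with $u_s := e^{isa}$, and show it coincides with $\tF(t) = e^{t\de_{ia}}$ by a uniqueness-of-ODE argument; the derivative formula at $t=0$ then drops out.

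First, I would assemble the basic properties of $\Phi(t)$. Since $a \in \cM_{\sa}$, each $u_s$ is a unitary in $\cM$ with $u_s^* = u_{-s}$ and $u_s u_r = u_{s+r}$, so $\Phi(t)$ is a normal $*$-automorphism of $\cM$ and $\Phi(s+t) = \Phi(s)\circ\Phi(t)$. A direct unpacking of \eq{Def_OrderDerivsOnVNAs} with $c = ia$ gives
\begin{align*}
\de_{ia}(b) = \tfrac{1}{2}\bigl(iab + b(ia)^*\bigr) = \tfrac{i}{2}(ab - ba) = \tfrac{i}{2}[a,b],
\end{align*}
valid for every $b \in \cM$, not only for self-adjoint $b$.

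Next, I would differentiate $\Phi$ in operator norm. The map $s \mapsto u_s$ is norm-differentiable with $\frac{d}{ds}u_s = ia\,u_s$, so by the Leibniz rule
\begin{align*}
\frac{d}{dt}\Phi(t)(b) = \tfrac{i}{2}a\,u_{t/2}\,b\,u_{-t/2} - u_{t/2}\,b\,u_{-t/2}\,\tfrac{i}{2}a = \tfrac{i}{2}\bigl[a,\Phi(t)(b)\bigr] = \bigl(\de_{ia}\circ\Phi(t)\bigr)(b).
\end{align*}
Hence $\Phi$ solves the linear initial value problem $X'(t) = \de_{ia}\circ X(t)$, $X(0) = \Id$, in the Banach space $\mc{B}(\cM)$.

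Since $\de_{ia}$ is a bounded linear operator on $\cM$, the norm-convergent series $e^{t\de_{ia}}$ is the unique solution of this initial value problem; therefore $\Phi(t) = e^{t\de_{ia}} = \tF(t)$ for all $t \in \bbR$, which is the second assertion. Evaluating the series expansion of $e^{t\de_{ia}}$ at $t=0$ (equivalently, setting $t=0$ in the ODE above) gives $\frac{d}{dt}\tF(t)|_{t=0} = \de_{ia}$, which is the first assertion. The only mild subtlety is justifying the Leibniz rule in operator norm, but this is standard since $s \mapsto u_s$ is norm-analytic with bounded derivative, so no serious obstacle arises — the essential content of the lemma is the explicit inner implementation by the unitary $u_{t/2} = e^{\tfrac{i}{2}ta}$.
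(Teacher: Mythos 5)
Your proof is correct. Note that the paper itself gives no proof of this lemma --- it is quoted from Alfsen and Shultz --- so there is nothing internal to compare against; your argument (compute $\de_{ia}(b)=\tfrac{i}{2}[a,b]$ from \eq{Def_OrderDerivsOnVNAs}, check that $\Phi(t)(b)=u_{t/2}\,b\,u_{-t/2}$ solves $X'=\de_{ia}\circ X$ with $X(0)=\Id$, and invoke uniqueness for a linear ODE with bounded generator) is the standard derivation and establishes both claims, with the derivative formula at $t=0$ following immediately. The only cosmetic remark is that the $\hA$ appearing in the statement's exponent is evidently a typo for $a$, which you have interpreted correctly.
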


\section{Orientations}			\label{Sec_Orientations}
\subsection{Associative products, central projections, and dynamical correspondences}
In general, a given $JBW$-algebra $A$ may admit several associative, noncommutative products that make $A+iA$ into a von Neumann algebra and induce the given Jordan product.\footnote{We assume that a given $JBW$-algebra $A$ indeed is associated with some von Neumann algebra, since this is the situation we are interested in. Moreover, we assume that $A$ is not associative. It it was, it would be a commutative von Neumann algebra already.} The definition of order derivations \eq{Def_OrderDerivsOnVNAs} depends on the chosen associative product. In particular, the definition of skew order derivations $\de_{ia}=\frac{i}{2}[a,-]$ ($a\in\cM_{\sa}$) on the $JBW$-algebra underlying a von Neumann algebra will depend on the choice of associative product in $\cM$.

Let $(a,b)\mt ab$ and $(a,b)\mt a\star b$ be two associative products on (the linear space underlying) a von Neumann algebra $\cM$ that induce the same Jordan product, i.e., for all $a,b\in A$,
\begin{align}
			a\cdot b=\frac{1}{2}(ab+ba) = \frac{1}{2}(a\star b+b\star a).
\end{align}
Alfsen and Shultz showed (see Thm. 7.103 and Lemma 7.100 in \cite{AlfShu01}) that any two such products differ by a central projection $c$ which is $1$ on the abelian part of $\cM$ in the following sense:
\begin{align}
			\forall a,b\in\cM: a\star b = cab+(1-c)ba.
\end{align}
Alfsen and Shultz use a central symmetry $z$ that is $1$ on the abelian part, which is given by $z=2c-1$. In the actual formula relating the products, the central projection $c$ shows up, so we will work with this. 

Hence, there is a bijective correspondence between central projections $c$ that are $1$ on the abelian part and associative products on (the linear space underlying) $\cM$ that induce the same Jordan product.

What this means is easiest to illustrate for the case of a factor (not of type $I_1$). If $\cM$ is a factor, there are two associative products on $\cM$ that induce the same Jordan product $(a,b)\mt \frac{1}{2}(ab+ba)$: one can either pick $(a,b)\mt ab$ or $(a,b)\mt a\star b:=ba$. There are two central projections in $\cM$ (and there is no abelian part), $c=0$ and $c=1$. The two products relate to each other by
\begin{align}
			\forall a,b\in\cM: a\star b = ba = 0ab+ (1-0)ba.
\end{align}
Considering skew order derivations, we have either
\begin{align}
			\forall a,b\in\cM_{\sa}: \de_{ia}(b)=\frac{i}{2}(ab-ba)=\frac{i}{2}[a,b]
\end{align}
for the product $(a,b)\mt ab$, or
\begin{align}
			\forall a,b\in\cM_{\sa}: \de_{ia;\star}=\frac{i}{2}(a\star b-b\star a)=\frac{i}{2}[b,a]
\end{align}
for the product $(a,b)\mt a\star b=ba$. Of course, we have
\begin{align}
			ab &= a\cdot b -i\de_{ia}(b) = \frac{1}{2}(ab+ba)+\frac{1}{2}(ab-ba),\\
			ba &= a\cdot b -i\de_{ia;\star}(b) = \frac{1}{2}(ab+ba)+\frac{1}{2}(ba-ab).
\end{align}
If $\cM$ is a finite direct sum of factors, we can pick the order of the product in each direct summand separately. The only exception is if there is a direct summand of type $I_1$, which is an abelian part where picking an order of the product of course is meaningless. One simply demands that on such a direct summand, no choice can be made. In all other direct summands (i.e., factors), there are two possible choices of order of the product. These choices can be encoded by picking a central projection $c$ (which means picking $0$ or $1$ in each factor) that is $1$ on each direct summand of type $I_1$.

In terms of commutators, in each factor $\cN$ in the decomposition of $\cM$ we pick either $\de_{ia}=\frac{1}{2}[a,-]$ or $\de_{ia;\star}=\frac{1}{2}[-,a]$ for all $a\in\cN_{\sa}$.

Every von Neumann algebra $\cM$ can be decomposed as a direct integral of factors, so this argument generalises. A central projection $c$ that is $1$ on the abelian part picks the order of the product in each factor showing up in the decomposition and hence fixes the product globally in $\cM$. 

A closely related way of thinking about this is by axiomatising the map $a\mt\de_{ia}$ from the elements of a real $JBW$-algebra $A$ to the skew order derivations on $A$.

\begin{definition}
(Def. 6.10 in \cite{AlfShu03}) A \textbf{dynamical correspondence} on a $JB$-algebra $A$ is a linear map
\begin{align}
	\psi: A &\lra OD_s(A)\\			\nonumber
	a &\lmt \psi_a
\end{align}
from $A$ into the set of skew order derivations on $A$ which satisfies the requirements
\begin{itemize}
	\item [(i)] $[\psi_a,\psi_b]=-[\de_a,\de_b]$ for all $a,b\in A$,
	\item [(ii)] $\psi_a a=0$ for all $a\in A$.
\end{itemize}
A dynamical correspondence on a $JB$-algebra $A$ will be called \textbf{complete} if it maps $A$ onto the set of all skew order derivations on $A$.
\end{definition}

\begin{theorem}
(Alfsen/Shultz) A $JBW$-algebra $A$ is (Jordan isomorphic to) the self-adjoint part of a von Neumann algebra if and only if there exists a dynamical correspondence on $A$. In this case, there is a bijective correspondence of associative products on $A+iA$ and dynamical correspondences on $A$. The dynamical correspondence on $A$ associated with an associative product $(a,b)\mt ab$ is
\begin{align}
			\psi_a b = \frac{i}{2}[a,b].
\end{align}
This dynamical correspondence is complete. Conversely, the associative product on $A+iA$ associated with a dynamical correspondence $\psi$ on $A$ is the complex bilinear extension of the product defined on $A$ by
\begin{align}
			ab = a\cdot b-i\psi_a b.
\end{align}
\end{theorem}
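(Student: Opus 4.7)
The plan is to prove both implications and then check that the two constructions are mutually inverse. The forward direction is essentially a repackaging of the preceding proposition: given a von Neumann algebra $\cM$ with $A=\cM_{\sa}$, set $\psi_a:=\de_{ia}=\frac{i}{2}[a,-]$. Linearity of $\psi$ in $a$ is clear from bilinearity of the commutator; condition (ii) is $\psi_a a=\frac{i}{2}[a,a]=0$; and condition (i) follows from a short computation using the associative product in $\cM$, showing $[\psi_a,\psi_b]c=-\frac{1}{4}[[a,b],c]$ while $[\de_a,\de_b]c=+\frac{1}{4}[[a,b],c]$, which agree up to sign. Completeness is immediate from the preceding proposition, which identifies \emph{all} skew order derivations on $\cM_{\sa}$ as the maps $\de_{ia}$.

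For the reverse direction, starting from a dynamical correspondence $\psi$ on a $JBW$-algebra $A$, I would define a product on $A$ by $ab:=a\cdot b-i\psi_a b$ and extend $\cdot$, $\psi_a$, and the product itself complex-bilinearly to $A+iA$. Polarising condition (ii) at $a+b$ yields the antisymmetry $\psi_a b=-\psi_b a$, which guarantees both that the symmetric part of the new product is the original Jordan product and that the canonical involution $(a_1+ia_2)^*:=a_1-ia_2$ satisfies $(xy)^*=y^*x^*$. The main obstacle is associativity $(xy)z=x(yz)$. Expanding both sides with $ab=a\cdot b-i\psi_a b$ and sorting by powers of $i$ produces identities that are precisely tamed by condition (i) of the dynamical correspondence, together with the fact (Lemma \ref{Lemma_ede_iaGivesJordanAutoms}) that each skew order derivation $\psi_a$ is a Jordan derivation, so that $\psi_a(b\cdot c)=(\psi_a b)\cdot c+b\cdot(\psi_a c)$. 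Once associativity holds, $A+iA$ is an associative $*$-algebra; extending the $JB$-norm to a $C^*$-norm on $A+iA$ by means of the $JB$-axioms makes it a $C^*$-algebra, and the hypothesis that $A$ is a $JBW$-algebra (so that it is a dual space) then promotes it to a von Neumann algebra in the standard way.

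Finally, bijectivity is checked by direct comparison: starting from an associative product $ab$, building $\psi_a b=\frac{i}{2}[a,b]$, and then forming $a\cdot b-i\psi_a b=\frac{1}{2}(ab+ba)+\frac{1}{2}(ab-ba)=ab$ returns the original product, while the reverse composition is immediate from the defining formulas. The hardest piece of routine work will be the explicit verification of associativity, and the most delicate conceptual step is the passage from the associative $*$-algebra structure on $A+iA$ to a genuine von Neumann algebra structure via norm and predual.
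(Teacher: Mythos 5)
The paper offers no proof of this theorem at all: it is quoted from Alfsen--Shultz, with a pointer to Prop.~6.11, Thm.~6.15 and Cor.~6.16 of their book. So the only question is whether your blind reconstruction would actually work. Your forward direction is correct and complete as a sketch (the sign computation $[\psi_a,\psi_b]c=-\tfrac14[[a,b],c]$ versus $[\de_a,\de_b]c=+\tfrac14[[a,b],c]$ is right, and completeness does follow from the classification of skew order derivations on $\cM_{\sa}$), and your mutual-inverse check at the end is also correct.

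The reverse direction, however, has a genuine gap. Writing out $(ab)c=a(bc)$ with $ab=a\cdot b-i\psi_a b$ and separating the $A$- and $iA$-components gives exactly two identities:
\begin{align*}
(a\cdot b)\cdot c-a\cdot(b\cdot c)&=\psi_{\psi_a b}\,c-\psi_a\psi_b\,c,\\
\psi_{a\cdot b}\,c+(\psi_a b)\cdot c&=\psi_a(b\cdot c)+a\cdot(\psi_b c).
\end{align*}
Using the Leibniz rule for $\psi_a$, the second reduces to $\psi_{a\cdot b}=\de_a\psi_b+\de_b\psi_a$, and antisymmetrising the first in $a,b$ forces $\psi_{\psi_a b}=[\psi_a,\psi_b]$. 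Neither identity is among the axioms of a dynamical correspondence, and neither follows from them by ``expanding and sorting by powers of $i$'': axiom (i) constrains the operator commutator $[\psi_a,\psi_b]$, but says nothing about $\psi$ evaluated at the \emph{elements} $a\cdot b$ or $\psi_a b$. If these identities were formal consequences of (i) and (ii), every $JB$-algebra admitting a dynamical correspondence would be special by pure algebra; in reality, excluding the exceptional part (e.g.\ $H_3(\mathbb{O})$) and dealing with spin factors is the substantive content of the theorem, and the Alfsen--Shultz proof proceeds via Connes orientations and Connes' structure-theoretic theorem on $JBW$-algebras, not via a direct verification of associativity. Your final step --- promoting the associative $*$-algebra $A+iA$ to a $C^*$-algebra and then a von Neumann algebra ``by means of the $JB$-axioms'' --- is also nontrivial and is usually handled through the universal enveloping (von Neumann) algebra. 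So the plan correctly isolates where the difficulty lies but substantially underestimates it; as written, the associativity step would fail.
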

For a proof, see Prop. 6.11, Thm. 6.15 and Cor. 6.16 in \cite{AlfShu03}. Hence, picking a dynamical correspondence on a $JBW$-algebra is equivalent to picking commutators and thus an associative product.

Alfsen and Shultz moreover show (\cite{AlfShu03}, Thm. 6.18) that there is a bijective correspondence between dynamical correspondences on a $JBW$-algebra and orientations in the sense of Connes \cite{Con74}. In the following, we will speak of picking an \textbf{orientation} on a $JBW$-algebra $A$ when we mean picking a dynamical correspondence/commutators/an associative product.

\subsection{Relation to our results so far}
Prop. \ref{Prop_fVNAMorphismIfCommutesWithde_ia} shows that a normal unital Jordan homomorphism $f:\cM_{\sa}\ra\cN_{\sa}$ between the real parts of two von Neumann algebras extends to a (normal unital) homomorphism $f:\cM\ra\cN$ of von Neumann algebras if and only if
\begin{align}			\label{Cond2}
			\forall a\in\cM_{\sa}: f\circ\de_{ia} = \de_{if(a)}\circ f.
\end{align}
Here, a choice of dynamical correspondences both on $A=(\cM_{\sa},\cdot)$ and $B=(\cN_{\sa},\cdot)$ and hence a choice of orientations/commutators/associative products has been made. In this light, condition \eq{Cond2} can be interpreted as the requirement that the chosen orientations are preserved by $f$. 

Hence, the proposition states that $f$ is a morphism of von Neumann algebras if and only if it \emph{preserves the chosen orientations}. 

In terms of the one-parameter groups of inner (Jordan $*$-)automorphisms $t\mt e^{t\de_{ia}}$, the choice of an orientation amounts to picking the \emph{direction of time}. Consider the case of a factor $\cM$ first: if we pick $a\mt\de_{ia}=\frac{i}{2}[a,-]$ for each $a\in\cM$, then the associated one-parameter group of inner automorphisms is $t\mt e^{t\de_{ia}}$, but if we pick $a\mt\de_{ia;\star}=\frac{i}{2}[-,a]=-\frac{i}{2}[a,-]$, then the associated one-parameter group of inner automorphisms is $t\mt e^{-t\de_{ia}}$. In general, in the $JBW$-algebra $A=(\cM_{\sa},\cdot)$ underlying a von Neumann algebra $\cM$, such a choice of time direction has to be made for each factor in the decomposition of $\cM$ into factors, except for the abelian part of $\cM$, where the automorphisms act trivially. Such a global choice of time direction will be called a \textbf{time orientation on $A$}. 

By construction, the choice of a time orientation is equivalent to a choice of orientation/associative product on $A$, making it into a von Neumann algebra $\cM$. Different choices of time orientation on $A$ will give different associative products and hence different, potentially non-isomorphic von Neumann algebras $\cM,\cM',...$, but each associative product will induce the same Jordan product on $A$. 

Corollary \ref{Cor_fvNaHomomIfffCommutesWithOneParamGroupsOfJordanAutoms} shows that a normal unital Jordan homomorphism $f:\cM_{\sa}\ra\cN_{\sa}$ between the real parts of two von Neumann algebras extends to a normal unital homomorphism $f:\cM\ra\cN$ of von Neumann algebras if and only if
\begin{align}			\label{Cond3}
			\forall a\in\cM_{\sa}\;\forall t\in\bbR: f\circ e^{t\de_{ia}}=e^{t\de_{if(a)}}\circ f.
\end{align}
Here, a choice of time orientations both on $A=(\cM_{\sa},\cdot)$ and $B=(\cN_{\sa},\cdot)$ and hence a choice of orientations/commutators/associative products has already been made. Condition \eq{Cond3} can be interpreted as the requirement that the chosen time orientations are preserved by $f$. Hence, $f$ is a morphism of von Neumann algebras if and only if \emph{$f$ preserves the chosen time orientations}.

\section{The oriented context category as a complete invariant of a von Neumann algebra}
We now construct new complete invariants for von Neumann algebras. First, we focus on the context category $\VM$ of a von Neumann algebra $\cM$. Let $A=(\cM_{\sa},\cdot)$ be the $JBW$-algebra associated with $\cM$, and let
\begin{align}
			\psi: A &\lra OD_s(A)\\			\nonumber
			a &\lmt \de_{ia}=\frac{i}{2}[a,-]
\end{align}
be the dynamical correspondence on $A$ induced by the associative product of $\cM$. By the results by Alfsen and Shultz, $(A,\psi)$ is a complete invariant of $\cM$. Let $\VM$ be the context category of $\cM$. A von Neumann algebra $\cN$ is commutative if and only if the associated Jordan algebra $(\cN_{\sa},\cdot)$ is associative. This implies that $\VM$ can also be described in terms of the $JBW$-algebra $A=(\cM_{\sa},\cdot)$ alone: it is the set of weakly closed associative Jordan subalgebras of $A$, partially ordered by inclusion.

The dynamical correspondence $\psi$ determines the time direction of the one-parameter groups $t\mt e^{t\de_{ia}}$ of inner (Jordan) automorphisms of $A$. Moreover, each one-parameter group $t\mt e^{t\de_{ia}}$ induces a one-parameter group of order-automorphisms of $\VM$, where, for each $t\in\bbR$,
\begin{align}
			\widetilde{e^{t\de_{ia}}} : \VM &\lra \VM\\
			V &\lmt e^{\frac{i}{2}ta}V e^{-\frac{i}{2}ta}.
\end{align}
The time direction of $t\mt\widetilde{e^{t\de_{ia}}}$ is fixed by $\psi$. Conversely, each one-parameter group of order-isomorphisms of $\VM$ induces a one-parameter group of Jordan isomorphisms of $A$ by Thm. \ref{Thm_OrderIsosGiveJBWIsos}. This shows that picking the dynamical correspondence $\psi$ on $A$ is equivalent to picking the time direction of the one-parameter groups $t\mt\widetilde{e^{t\de_{ia}}}$ ($a\in A$) of order-automorphisms of $A$.

\begin{definition}
Let $\cM$ be a von Neumann algebra, and let $\VM$ be its context category. Let $\Aut_{ord}(\VM)$ denote the group of order-automorphisms of $\VM$. The map
\begin{align}
			\tilde\psi: \cM_{\sa}\times\bbR &\lra \Aut_{ord}(\VM)\\
			(a,t) &\lmt \widetilde{e^{t\de_{ia}}}
\end{align}
is called the \textbf{time orientation on order-automorphisms of $\VM$ induced by $\cM$}. When $\VM$ is equipped with this time orientation, is it called the \textbf{oriented context category of $\cM$}.
\end{definition}

\begin{theorem}			\label{Thm_OrientedVMIsCompleteInvariant}
Let $\cM,\cN$ be von Neumann algebras not isomorphic to $\bbC\oplus\bbC$ and with no type $I_2$ summands. There is a bijective correspondence between isomorphisms $f:\cM\ra\cN$ of von Neumann algebras and order-isomorphisms $\tilde f:\VM\ra\VN$ of the context categories that preserve the orientations on $\VM$ and $\VN$ induced by $\cM$ and $\cN$, respectively, that is, order-isomorphisms $\tilde f:\VM\ra\VN$ such that the diagram
\begin{equation}			\label{Diag_tildefPreservesOrientations}
			\xymatrix{
			\VM  \ar[rr]^{\tilde f} \ar[dd]_{\widetilde{e^{t\de_{ia}}}} & & \VN \ar[dd]^{\widetilde{e^{t\de_{if(a)}}}}
			\\ & & \\ 
			\VM \ar[rr]^{\tilde f} & & \VN.
			}
\end{equation}
commutes for all $a\in\cM_{\sa}$ and all $t\in\bbR$.
\end{theorem}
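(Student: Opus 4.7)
The strategy is to combine Thm. \ref{Thm_OrderIsosGiveJBWIsos} with Cor. \ref{Cor_fvNaHomomIfffCommutesWithOneParamGroupsOfJordanAutoms} via the compositional character of the assignment $f\mapsto\tilde f$. Thm. \ref{Thm_OrderIsosGiveJBWIsos} already supplies a bijection between order-isomorphisms $\tilde f:\VM\ra\VN$ and normal Jordan $*$-isomorphisms $f:\cM\ra\cN$, characterized by $\tilde f(V)=f[V]$. What remains is to show that the extra condition on $\tilde f$ expressed by diagram \eq{Diag_tildefPreservesOrientations} translates, under this bijection, into exactly the condition \eq{Cond_fCommutesWithetde_ia} of Cor. \ref{Cor_fvNaHomomIfffCommutesWithOneParamGroupsOfJordanAutoms}, which singles out precisely those Jordan $*$-isomorphisms that extend to isomorphisms of von Neumann algebras.

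First I would record that the bijection $f\leftrightarrow\tilde f$ respects composition: for composable normal Jordan $*$-isomorphisms $g$ and $h$, one has $\widetilde{g\circ h}=\tilde g\circ\tilde h$, since both sides send $V\mapsto g[h[V]]$. Applied to a single algebra, Thm. \ref{Thm_OrderIsosGiveJBWIsos} associates to the inner Jordan $*$-automorphism $e^{t\de_{ia}}:\cM\ra\cM$ (Lemma \ref{Lemma_SkewOrderDerivsGiveInnerAutoms}) the order-automorphism $V\mapsto e^{\frac{i}{2}ta}Ve^{-\frac{i}{2}ta}$, which is exactly $\widetilde{e^{t\de_{ia}}}$ in the notation of the definition preceding the theorem. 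By compositionality, the two legs $\tilde f\circ\widetilde{e^{t\de_{ia}}}$ and $\widetilde{e^{t\de_{if(a)}}}\circ\tilde f$ of diagram \eq{Diag_tildefPreservesOrientations} are the order-isomorphisms associated to the Jordan $*$-isomorphisms $f\circ e^{t\de_{ia}}$ and $e^{t\de_{if(a)}}\circ f$, respectively. Injectivity of the bijection then yields that the diagram commutes for a given pair $(a,t)$ if and only if $f\circ e^{t\de_{ia}}=e^{t\de_{if(a)}}\circ f$; quantifying over $a\in\cM_{\sa}$ and $t\in\bbR$ gives the equivalence of diagram \eq{Diag_tildefPreservesOrientations} with condition \eq{Cond_fCommutesWithetde_ia}.

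The proof then finishes by invoking Cor. \ref{Cor_fvNaHomomIfffCommutesWithOneParamGroupsOfJordanAutoms}: condition \eq{Cond_fCommutesWithetde_ia} holds iff $f$ extends (uniquely) to a normal unital homomorphism of von Neumann algebras, which, since $f$ is already a Jordan $*$-isomorphism, is automatically an isomorphism. Conversely, any isomorphism of von Neumann algebras restricts to a normal Jordan $*$-isomorphism satisfying \eq{Cond_fCommutesWithetde_ia}, and therefore corresponds under Thm. \ref{Thm_OrderIsosGiveJBWIsos} to an orientation-preserving order-isomorphism $\tilde f$. I do not expect genuine obstacles: the statement is essentially a repackaging of Thm. \ref{Thm_OrderIsosGiveJBWIsos}, Lemma \ref{Lemma_SkewOrderDerivsGiveInnerAutoms} and Cor. \ref{Cor_fvNaHomomIfffCommutesWithOneParamGroupsOfJordanAutoms}, and the only point to handle with mild care is the compositionality of $f\mapsto\tilde f$ together with the identification of $\widetilde{e^{t\de_{ia}}}$ with the image of $e^{t\de_{ia}}$ under the bijection, both of which are immediate from the defining relation $\tilde f(V)=f[V]$.
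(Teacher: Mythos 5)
Your proof is correct and follows essentially the same route as the paper: reduce via Thm.~\ref{Thm_OrderIsosGiveJBWIsos} to a Jordan $*$-isomorphism, show that commutativity of the diagram on context categories is equivalent to commutativity of the corresponding diagram on the algebras, and conclude with Cor.~\ref{Cor_fvNaHomomIfffCommutesWithOneParamGroupsOfJordanAutoms}. The only difference is that you spell out the compositionality of $f\mapsto\tilde f$ and the injectivity of the correspondence, a step the paper's proof asserts without elaboration.
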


\begin{proof}
Let $\tilde f:\VM\ra\VN$ be an order-isomorphism. By Thm. \ref{Thm_OrderIsosGiveJBWIsos}, $\tilde f$ corresponds to a unique Jordan $*$-isomorphism $f:\cM\ra\cN$. Moreover, $\tilde f:\VM\ra\VN$ preserves orientations (i.e., the diagram \eq{Diag_tildefPreservesOrientations} commutes for all $a\in\cM_{\sa}$ and all $t\in\bbR$) if and only if
\begin{equation}
			\xymatrix{
			\cM \ar[rr]^{f} \ar[dd]_{e^{t\de_{ia}}} & & \cN \ar[dd]^{e^{t\de_{if(a)}}}
			\\ & & \\ 
			\cM \ar[rr]^{f} & & \cN.
			}
\end{equation}
commutes for all $a\in\cM_{\sa}$ and all $t\in\bbR$. By Cor. \ref{Cor_fvNaHomomIfffCommutesWithOneParamGroupsOfJordanAutoms}, this means that $f:\cM\ra\cN$ is an isomorphism of von Neumann algebras.
\end{proof}

Hence, the context category $\VM$, together with the time orientation induced by $\cM$, is a complete invariant of $\cM$.

\section{The oriented spectral presheaf as a complete invariant of a von Neumann algebra}
The following result was proven in \cite{Doe12b}:\footnote{In this reference, the category $\Pre{\KHaus}$ of presheaves valued in compact Hausdorff spaces is used. Morphisms in $\KHaus$ are continuous maps, not necessarily open, while morphisms in $\HStone$ are continuous and open, but isomorphisms in $\KHaus$ (i.e., homeomorphisms) of course are continuous and open. Hence, one can use $\Pre{\KHaus}$ as well.}
\begin{theorem}			\label{Thm_JordanIsosGiveSpecPreshIsosAndViceVersa}
Let $\cM,\cN$ be von Neumann algebras not isomorphic to $\bbC\oplus\bbC$ and with no type $I_2$ summands. There is a bijective correspondence between normal Jordan isomorphisms $f:\cM_{\sa}\ra\cN_{\sa}$ and isomorphisms $F:\SigN\ra\SigM$ in $\Pre{\HStone}$ of the spectral presheaves of $\cM,\cN$.
\end{theorem}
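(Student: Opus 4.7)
The plan is to construct mutually inverse assignments between normal Jordan isomorphisms $f:\cM_{\sa}\to\cN_{\sa}$ and isomorphisms $F:\SigN\to\SigM$ in $\Pre{\HStone}$, using Theorem \ref{Thm_OrderIsosGiveJBWIsos} for the base-category component and Gelfand duality applied fibrewise for the presheaf component.

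For the forward direction, start with a normal Jordan isomorphism $f:\cM_{\sa}\to\cN_{\sa}$, uniquely extended to a normal Jordan $*$-isomorphism $f:\cM\to\cN$. Theorem \ref{Thm_OrderIsosGiveJBWIsos} supplies the order-isomorphism $\tilde f:\VM\to\VN$, $V\mapsto f[V]$. For each $V$, the restriction $f|_V:V\to f[V]$ is a Jordan $*$-isomorphism of commutative von Neumann algebras, hence an associative $*$-isomorphism; Gelfand duality dualises it to a homeomorphism $\iota^f_V:\Sigma(V)\to\Sigma(f[V])$, $\ld\mapsto\ld\circ f|_V^{-1}$, which is an arrow in $\HStone$. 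Naturality of $(\iota^f_V)_V$ in $(\HStone^{\op})^{\VM}$ is immediate: for $V'\subseteq V$, the map $f|_V^{-1}$ restricts to $f|_{V'}^{-1}$, so $\iota^f$ intertwines the restriction maps on $\SigM$ and on $\tilde f^*\SigN$. Setting $F_f:=\pair{\tilde f}{\iota^f}$ gives an isomorphism in $\Pre{\HStone}$, since $\tilde f$ is an order-isomorphism and each $\iota^f_V$ is a homeomorphism.

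For the reverse direction, given an isomorphism $F=\pair{H}{\iota}:\SigN\to\SigM$, the base functor $H:\VM\to\VN$ is invertible, hence an order-isomorphism of posets. Theorem \ref{Thm_OrderIsosGiveJBWIsos} then yields a unique normal Jordan $*$-isomorphism $f:\cM\to\cN$ with $\tilde f=H$; set $f_F:=f|_{\cM_{\sa}}$.

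The identity $f_{F_f}=f$ is immediate from the uniqueness in Theorem \ref{Thm_OrderIsosGiveJBWIsos}. The main obstacle is the converse, $F_{f_F}=F$, which reduces to the claim that whenever $F=\pair{\tilde f}{\iota}$ is an isomorphism in $\Pre{\HStone}$, the natural transformation $\iota$ is necessarily $\iota^f$. The plan here is to Gelfand-dualise each component $\iota_V:\Sigma(V)\to\Sigma(f[V])$ into a $C^*$-isomorphism $h_V:f[V]\to V$, and to translate the naturality of $\iota$ with respect to inclusions $V'\subseteq V$ into the compatibility $h_V|_{f[V']}=h_{V'}$. The exclusion of $\bbC\oplus\bbC$ and of type $I_2$ summands ensures that $\VM$ contains sufficiently many small subalgebras $V'$ with two-point spectra to distinguish characters via restriction; naturality over these subalgebras pins down $\iota_V$ on characters and hence forces $h_V\circ f|_V=\id_V$, so $h_V=f|_V^{-1}$ and $\iota_V=\iota^f_V$. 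The underlying rigidity is the same as in Theorem \ref{Thm_OrderIsosGiveJBWIsos}: the poset $\VM$, refined by the Gelfand spectra attached to each context and their restriction maps, carries precisely enough information to reconstruct the $*$-isomorphism on every commutative subalgebra, which is why this last step is where the exceptional algebras really need to be excluded.
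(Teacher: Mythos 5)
Your forward construction coincides with the paper's: the paper builds $\pair{\tilde f}{\cG_f}$ with components $\cG_{f;V}\colon\Sigma(f(V))\to\Sigma(V)$, $\ld\mapsto\ld\circ f|_V$, obtained by Gelfand duality from $f|_V$; your $\io^f_V$ is the inverse homeomorphism, so your components point opposite to the paper's convention for arrows in $\Pre{\HStone}$ -- harmless here because everything is invertible, but it should be aligned with Definition \ref{Def_CatOfPresheaves}. Note that the paper itself offers no proof of the converse direction; it defers to [Doe12b] after sketching exactly the construction you give.

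The genuine gap is in your surjectivity step, the claim that for an isomorphism $F=\pair{\tilde f}{\io}$ naturality forces $\io_V$ to be the Gelfand dual of $f|_V$. Dualising, this is the assertion that a family of $*$-automorphisms $g_V:=h_V\circ f|_V$ of the contexts with $g_V|_{V'}=g_{V'}$ for $V'\subseteq V$ must be the identity family. Your stated mechanism -- that $\VM$ contains enough subalgebras with two-point spectra to ``distinguish characters via restriction'' -- is not what does the work. The two-point-spectrum contexts $V_p=\bbC p+\bbC(1-p)$ are precisely where the ambiguity lives: each $\Sigma(V_p)$ admits a swap, and a compatible family of swaps would yield a nonidentity natural automorphism over the identity base map. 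What rules out the swap is naturality in the \emph{upward} direction: embedding $V_p$ into a context $V$ with at least three mutually orthogonal nonzero projections forces $g_V(p)$ to be a minimal projection of $V$ lying in $V_p$, and since $1-p$ is then not minimal in $V$, $g_{V_p}(p)=p$; from this $g_V=\id$ for all $V$ follows because characters are determined by their values on projections. This is exactly where the exclusion of $\bbC\oplus\bbC$ and of type $I_2$ summands enters: in $M_2(\bbC)$ every nontrivial context has a two-point spectrum, no such enlargement exists, and $\Aut(\SigM)$ is strictly larger than the Jordan automorphism group, so the bijection fails. You have correctly located where the hypotheses are needed, but the argument you supply there would not close the case, and without it $F_{f_F}=F$ remains unproven.
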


The forward direction works more generally: a normal Jordan homomorphism $f:\cM_{\sa}\ra\cN_{\sa}$ induces a morphism $\pair{\tilde f}{\cG_f}:\SigN\ra\SigM$ of the spectral presheaves. Concretely, given a normal Jordan homomorphism $f:\cM_{\sa}\ra\cN_{\sa}$, we first extend it uniquely to a normal Jordan $*$-homomorphism $f:\cM\ra\cN$. This induces an order-preserving map
\begin{align}
			\tilde f:\VM &\lra \VN\\			\nonumber
			V &\lmt f(V).
\end{align}
By precomposition with $\tilde f$, one can `pull back' $\SigN$ to $\SigN\circ\tilde f$, which is a presheaf over $\VM$. For each $V\in\VM$, the restriction $f|_{V}:V\ra f(V)$ is a morphism of commutative von Neumann algebras, so by Gelfand duality there is a continuous map
\begin{align}
			\cG_{f;V}:\Sigma(f(V)))=(\SigN\circ\tilde f)_V &\lra \SigM_V = \Sigma(V)\\
			\ld &\lmt \ld\circ f|_V
\end{align}
between the Gelfand spectra. As was shown in \cite{Doe12b}, the $\cG_{f;V}$ ($V\in\VM$) are the components of a natural transformation $\cG_f:\SigN\circ\tilde f\ra\SigM$. We obtain a morphism
\begin{align}
			\pair{\tilde f}{\cG_f}:\SigN \overset{\tilde f}{\lmt} \SigB\circ\tilde f \overset{\cG_{f}}{\lmt} \SigM
\end{align}
of the spectral presheaves, which is a morphism in the category $\Pre{\HStone}$. For the converse direction, which works for isomorphisms, see \cite{Doe12b}.

Let $\Aut(\SigM)$ denote the automorphism group of $\SigM$ in the category $\Pre{\HStone}$, and recall that $\Aut_{\JBW}(\cM)$ is the automorphism group of $(\cM_{\sa},\cdot)$ in the category of $JBW$-algebras.
\begin{corollary}			\label{Cor_GroupIsos}
Let $\cM$ be a von Neumann algebra not isomorphic to $\bbC\oplus\bbC$ and with no type $I_2$ summand, then $\Aut(\SigM)$ is contravariantly isomorphic to $\Aut_{\JBW}(\cM_{\sa})$.
\end{corollary}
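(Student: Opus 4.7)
The plan is to deduce the corollary directly from Theorem \ref{Thm_JordanIsosGiveSpecPreshIsosAndViceVersa} by specialising to $\cN=\cM$. That theorem, so specialised, immediately yields a bijection of underlying sets between $\Aut_{\JBW}(\cM_{\sa})$ and $\Aut(\SigM)$, implemented in one direction by the assignment $\mc F:f\mt\pair{\tilde f}{\cG_f}$ constructed just above. What remains is to check that $\mc F$ reverses composition and sends the identity to the identity, so that it is a group anti-isomorphism, which is the meaning of ``contravariantly isomorphic'' here.

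For composition, I would take $f,g\in\Aut_{\JBW}(\cM_{\sa})$, extend each uniquely to a normal Jordan $*$-automorphism of $\cM$, and compute $\widetilde{g\circ f}$ and $\cG_{g\circ f}$. On contexts, $\widetilde{g\circ f}(V)=(g\circ f)(V)=\tilde g(\tilde f(V))$, so $\widetilde{g\circ f}=\tilde g\circ\tilde f$. On Gelfand spectra, for $V\in\VM$ and $\mu\in\Sigma(\tilde g(\tilde f(V)))$,
\[
\cG_{g\circ f;V}(\mu)=\mu\circ(g\circ f)|_V=\bigl(\mu\circ g|_{\tilde f(V)}\bigr)\circ f|_V=\cG_{f;V}\bigl(\cG_{g;\tilde f(V)}(\mu)\bigr),
\]
so $\cG_{g\circ f}$ coincides componentwise with $\cG_f\circ\cG_g$. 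Comparing this with the composition rule of Definition \ref{Def_CatOfPresheaves}, where an arrow $\pair{H}{\io}:\ps P_2\to\ps P_1$ composed after $\pair{H'}{\io'}:\ps P_3\to\ps P_2$ gives $\pair{H'\circ H}{\io\circ\io'}$, one sees that $\pair{\widetilde{g\circ f}}{\cG_{g\circ f}}$ is precisely the composite $\pair{\tilde f}{\cG_f}\circ\pair{\tilde g}{\cG_g}$ in $\Pre{\HStone}$. Hence $\mc F(g\circ f)=\mc F(f)\circ\mc F(g)$, so $\mc F$ is contravariant.

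For the identity, I would note that $f=\id_{\cM}$ gives $\tilde f=\id_{\VM}$ and $\cG_{\id;V}(\ld)=\ld\circ\id_V=\ld$, so $\mc F(\id_{\cM})$ is the identity morphism of $\SigM$ in $\Pre{\HStone}$. Together with the bijectivity supplied by Theorem \ref{Thm_JordanIsosGiveSpecPreshIsosAndViceVersa}, this shows that $\mc F$ is a group anti-isomorphism. All substantial content has already been absorbed into Theorem \ref{Thm_JordanIsosGiveSpecPreshIsosAndViceVersa}, so no genuine obstacle remains; the only point requiring care is the book-keeping of the direction in which arrows compose inside Definition \ref{Def_CatOfPresheaves}, which is what produces the anti-isomorphism rather than an isomorphism.
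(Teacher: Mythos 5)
Your proposal is correct and is exactly the argument the paper intends: the corollary is stated without proof as an immediate consequence of Theorem \ref{Thm_JordanIsosGiveSpecPreshIsosAndViceVersa} specialised to $\cN=\cM$, and your verification that $f\mt\pair{\tilde f}{\cG_f}$ reverses composition (via the rule $(\io\circ\io')_J=\io_J\circ\io'_{H(J)}$ of Definition \ref{Def_CatOfPresheaves}) and preserves identities simply makes explicit the book-keeping the paper leaves implicit.
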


\begin{definition}
A representation
\begin{align}
			F: \bbR &\lra \Aut(\SigM)\\			\nonumber
			t &\lmt \pair{\Ga_t}{\eta_t}
\end{align}
of the additive group of real numbers by automorphisms of the spectral presheaf is called a \textbf{flow on the spectral presheaf $\Sig$}. Let $\tF:\bbR\ra\Aut_{\JBW}(\cM_{\sa})$ be the one-parameter group of normal Jordan automorphisms of $(\cM_{\sa},\cdot)$ corresponding to $F$ by Cor. \ref{Cor_GroupIsos}. The flow $F$ is called
\begin{itemize}
	\item [(a)] \textbf{weakly continuous} if $\tF$ is pointwise weakly continuous,
	\item [(b)] \textbf{continuous} if $\tF$ is pointwise norm-continuous,
	\item [(c)] \textbf{inner} if $\tF$ is of the form $(\tF(t)=e^{t\de})_{t\in\bbR}$ for some order derivation $\de$ on $\cN_{\sa}$.
\end{itemize}
\end{definition}
If $F:\bbR\ra\Aut(\SigM)$ is an inner flow, then the generating order derivation $\de$ must be skew by Lemma \ref{Lemma_ede_iaGivesJordanAutoms}, since $\tF(t)=e^{t\de}$ is a Jordan automorphism for every $t$. Skew order derivations generate one-parameter groups of inner automorphisms by Lemma \ref{Lemma_SkewOrderDerivsGiveInnerAutoms}, hence the name inner flow is justified. 

Flows on the spectral presheaf were first defined in \cite{Doe12c}, where it was shown that the time evolution of quantum systems both in the Heisenberg and the Schr\"odinger picture can naturally be described in terms of flows.

Let $f:\cM_{\sa}\ra\cN_{\sa}$ be a normal Jordan homomorphism, let $a\in\cM_{\sa}$, and let
\begin{align}
			\tilde G_{f;a}:\bbR &\lra \Hom_{\JBW}(\cM_{\sa},\cN_{\sa})\\	\nonumber
			t &\lmt f\circ e^{t\de_{ia}}
\end{align}
and
\begin{align}
			\tilde H_{f;a}:\bbR &\lra \Hom_{\JBW}(\cM_{\sa},\cN_{\sa})\\	\nonumber
			t &\lmt e^{t\de_{if(a)}}\circ f.
\end{align}
These are two one-parameter groups of normal Jordan homomorphisms. Cor. \ref{Cor_fvNaHomomIfffCommutesWithOneParamGroupsOfJordanAutoms} shows that the normal Jordan homomorphism $f:\cM_{\sa}\ra\cN_{\sa}$ can be extended to an homomorphism $f:\cM\ra\cN$ of von Neumann algebras if and only if for every $a\in\cM_{sa}$, we have
\begin{align}
			\tilde G_{f;a} = \tilde H_{f;a}.
\end{align}
The same holds true if we replace `homomorphism' by `isomorphism' in the statement. Let $G_{f;a}$ be the one-parameter group of isomorphisms in $\Pre{\HStone}$ from $\SigN$ to $\SigM$ corresponding to $\tilde G_{f;a}$ by Thm. \ref{Thm_JordanIsosGiveSpecPreshIsosAndViceVersa}, and let $H_{f;a}$ be the one-parameter group of isomorphisms corresponding to $\tilde H_{f;a}$. Concretely,
\begin{align}
			G_{f;a}:\bbR &\lra \Hom_{\Pre{\HStone}}(\SigN,\SigM)\\	\nonumber
			t &\lmt \pair{\widetilde{f\circ e^{t\de_{ia}}}}{\cG_{f\circ e^{t\de_{ia}}}}
\end{align}
and
\begin{align}
			H_{f;a}:\bbR &\lra \Hom_{\Pre{\HStone}}(\SigN,\SigM)\\ \nonumber
			t &\lmt \pair{\widetilde{e^{t\de_{if(a)}}\circ f}}{\cG_{e^{t\de_{if(a)}}\circ f}}.
\end{align}
We see that a normal Jordan isomorphism $f:\cM_{\sa}\ra\cN_{\sa}$ can be extended to a isomorphism $f:\cM\ra\cN$ if and only if for all $a\in\cM_{\sa}$, we have $G_{f;a}=H_{f;a}$. We want to interpret this condition geometrically.

By the rules of composition in the category $\Pre{\HStone}$, for all $t\in\bbR$,
\begin{align}
			\pair{\widetilde{f\circ e^{t\de_{ia}}}}{\cG_{f\circ e^{t\de_{ia}}}} 
			= \pair{\widetilde{e^{t\de_{ia}}}}{\cG_{e^{t\de_{ia}}}}\circ\pair{\tilde f}{\cG_f}
\end{align}
and
\begin{align}
			 \pair{\widetilde{e^{t\de_{if(a)}}\circ f}}{\cG_{e^{t\de_{if(a)}}\circ f}}
			= \pair{\tilde f}{\cG_f}\circ\pair{\widetilde{e^{t\de_{if(a)}}}}{\cG_{e^{t\de_{if(a)}}}}.
\end{align}
Here, $t\mt\pair{\widetilde{e^{t\de_{ia}}}}{\cG_{e^{t\de_{ia}}}}$ is the inner flow on $\SigM$ induced by the skew order derivation $\de_{ia}$ on $(\cM_{\sa},\cdot)$, and $t\mt\pair{\widetilde{e^{t\de_{if(a)}}}}{\cG_{e^{t\de_{if(a)}}}}$ is the inner flow on $\SigN$ induced by the skew order derivation $\de_{if(a)}$ on $(\cN_{\sa},\cdot)$. Hence, the condition $G_{f;a}=H_{f;a}$ for all $a\in\cM_{\sa}$ is equivalent to requiring that the diagram
\begin{equation}			\label{Diag_MorPreservesFlows1}
			\xymatrix{
			\SigN  \ar[rr]^{\pair{\tilde f}{\cG_f}} \ar[dd]_{\pair{\widetilde{e^{t\de_{if(a)}}}}{\cG_{e^{t\de_{if(a)}}}}} & & \SigM \ar[dd]^{\pair{\widetilde{e^{t\de_{ia}}}}{\cG_{e^{t\de_{ia}}}}}
			\\ & & \\ 
			\SigN \ar[rr]^{\pair{\tilde f}{\cG_f}} & & \SigM
			}
\end{equation}
commutes for all $a\in\cM_{\sa}$ and all $t\in\bbR$. 

We can reformulate the condition as follows: let $b\in\cN_{\sa}$, and let $a=f^{-1}(b)$. The latter exists and is unique, because $f$ is a (Jordan) isomorphism. If the diagram
\begin{equation}			\label{Diag_MorPreservesFlows2}
			\xymatrix{
			\SigN  \ar[rr]^{\pair{\tilde f}{\cG_f}} \ar[dd]_{\pair{\widetilde{e^{t\de_{ib}}}}{\cG_{e^{t\de_{ib}}}}} & & \SigM \ar[dd]^{\pair{\widetilde{e^{t\de_{ia}}}}{\cG_{e^{t\de_{ia}}}}}
			\\ & & \\ 
			\SigN \ar[rr]^{\pair{\tilde f}{\cG_f}} & & \SigM
			}
\end{equation}
commutes for all $b\in\cN_{sa}$, $a=f^{-1}(b)$, and all $t\in\bbR$, we say that \textbf{$\pair{\tilde f}{\cG_f}$ preserves inner flows}. This condition is clearly equivalent to the condition that the diagram of the form \eq{Diag_MorPreservesFlows1} commutes for all $a\in\cM_{\sa}$ and all $t\in\bbR$, and hence to the condition $G_{f;a}=H_{f;a}$ for all $a\in\cM_{\sa}$, which in turn is equivalent to $f$ being an isomorphism of von Neumann algebras.

As we saw, the associative product in a given von Neumann algebra $\cM$ induces a dynamical correspondence $a\mt\de_{ia}=\frac{i}{2}[a,-]$ ($a\in\cM_{\sa}$) on the $JBW$-algebra $(\cM_{\sa},\cdot)$. This determines a time orientation of the one-parameter groups of inner automorphisms, $t\mt e^{t\de_{ia}}$, and hence a time orientation of the inner flows on $\SigM$, $t\mt\pair{\widetilde{e^{t\de_{ia}}}}{\cG_{e^{t\de_{ia}}}}$. We say that this is \textbf{the orientation on $\SigM$ induced by $\cM$}.

If an isomorphism $\pair{\tilde f}{\cG_f}:\SigN\ra\SigM$ preserves inner flows, i.e., the diagram \eq{Diag_MorPreservesFlows2} commutes for all $b\in\cN_{\sa}$, $a=f^{-1}(b)$, and all $t\in\bbR$, we also say that $\pair{\tilde f}{\cG_f}$ \textbf{preserves the orientations} on $\SigN$ and $\SigN$ induced by $\cM$ and $\cN$, respectively. If $\tilde\cM$ is another von Neumann algebra with the same associated $JBW$-algebra $(\cM_{\sa},\cdot)$, then the product $(a,b)\mt a\star b$ in $\tilde\cM$ induces a different dynamical correspondence $a\mt\de_{ia;\star}=\frac{i}{2}[a,-]_\star$ ($a\in\cM_{\sa}$), and hence another time orientation of inner flows on $\SigM$, $t\mt\pair{\widetilde{e^{t\de_{ia;\star}}}}{\cG_{e^{t\de_{ia;\star}}}}$. 

Conversely, different time orientations of inner flows on $\SigM$ determine different associative products in (the linear space underlying) $\cM$.

\begin{theorem}			\label{Thm_OrientedSigIsCompleteInvariant}
Let $\cM,\cN$ be von Neumann algebras not isomorphic to $\bbC\oplus\bbC$ and with no type $I_2$ summands, and let $\SigM,\SigN$ be their spectral presheaves, equipped with the orientations induced by $\cM$ and $\cN$, respectively. There is a bijective correspondence between isomorphisms $f:\cM\ra\cN$ of von Neumann algebras and orientation-preserving isomorphisms $\pair{\tilde f}{\cG_f}:\SigN\ra\SigM$, that is, isomorphisms in $\Pre{\HStone}$ such that the diagram
\begin{equation}			\label{Diag_MorPreservesFlows3}
			\xymatrix{
			\SigN  \ar[rr]^{\pair{\tilde f}{\cG_f}} \ar[dd]_{\pair{\widetilde{e^{t\de_{ib}}}}{\cG_{e^{t\de_{ib}}}}} & & \SigM \ar[dd]^{\pair{\widetilde{e^{t\de_{ia}}}}{\cG_{e^{t\de_{ia}}}}}
			\\ & & \\ 
			\SigN \ar[rr]^{\pair{\tilde f}{\cG_f}} & & \SigM
			}
\end{equation}
commutes for all $b\in\cN_{sa}$, $a=f^{-1}(b)$, and all $t\in\bbR$.
\end{theorem}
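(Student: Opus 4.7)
The plan is to chain together the results already in hand: Thm.~\ref{Thm_JordanIsosGiveSpecPreshIsosAndViceVersa}, Cor.~\ref{Cor_fvNaHomomIfffCommutesWithOneParamGroupsOfJordanAutoms}, and the bookkeeping carried out in the paragraphs immediately preceding the theorem statement. The structure mirrors the proof of Thm.~\ref{Thm_OrientedVMIsCompleteInvariant}: translate the presheaf-level orientation condition into a $JBW$-algebraic condition on the associated Jordan isomorphism, and then invoke Cor.~\ref{Cor_fvNaHomomIfffCommutesWithOneParamGroupsOfJordanAutoms} to recognise this as the condition characterising von Neumann algebra isomorphisms.

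Given an orientation-preserving isomorphism $\pair{\tilde f}{\cG_f}:\SigN\ra\SigM$, I would first invoke Thm.~\ref{Thm_JordanIsosGiveSpecPreshIsosAndViceVersa} to produce the unique corresponding normal Jordan $*$-isomorphism $f:\cM_{\sa}\ra\cN_{\sa}$, which by Thm.~\ref{Thm_OrderIsosGiveJBWIsos} extends uniquely to a normal Jordan $*$-isomorphism $\cM\ra\cN$. The next step is to show that commutativity of diagram \eq{Diag_MorPreservesFlows3} for all $b\in\cN_{\sa}$, $a=f^{-1}(b)$, and all $t\in\bbR$ is equivalent to the identity $f\circ e^{t\de_{ia}}=e^{t\de_{if(a)}}\circ f$ for all $a\in\cM_{\sa}$ and $t\in\bbR$. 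This is exactly the content of the decomposition established just before the theorem: the composition law in $\Pre{\HStone}$ rewrites the two sides of \eq{Diag_MorPreservesFlows3} as $\pair{\widetilde{e^{t\de_{ia}}}}{\cG_{e^{t\de_{ia}}}}\circ\pair{\tilde f}{\cG_f}$ and $\pair{\tilde f}{\cG_f}\circ\pair{\widetilde{e^{t\de_{if(a)}}}}{\cG_{e^{t\de_{if(a)}}}}$, respectively, and under Thm.~\ref{Thm_JordanIsosGiveSpecPreshIsosAndViceVersa} equality of these composites corresponds exactly to equality of the Jordan homomorphisms $\tilde G_{f;a}(t)$ and $\tilde H_{f;a}(t)$.

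Once this translation is in place, Cor.~\ref{Cor_fvNaHomomIfffCommutesWithOneParamGroupsOfJordanAutoms} immediately yields that $f$ is an isomorphism of von Neumann algebras. For the reverse direction, a von Neumann algebra isomorphism $f:\cM\ra\cN$ restricts to a normal Jordan $*$-isomorphism $\cM_{\sa}\ra\cN_{\sa}$, hence by Thm.~\ref{Thm_JordanIsosGiveSpecPreshIsosAndViceVersa} produces an isomorphism $\pair{\tilde f}{\cG_f}:\SigN\ra\SigM$ in $\Pre{\HStone}$; traversing the same chain of equivalences in reverse shows that \eq{Diag_MorPreservesFlows3} commutes, i.e.~$\pair{\tilde f}{\cG_f}$ preserves the orientations induced by $\cM$ and $\cN$. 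The bijectivity of the correspondence then follows from the bijectivity already provided by Thm.~\ref{Thm_JordanIsosGiveSpecPreshIsosAndViceVersa}.

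The main obstacle is not a genuinely new argument but the careful bookkeeping of the direction-reversal built into $\Pre{\HStone}$: a Jordan morphism $\cM_{\sa}\ra\cN_{\sa}$ corresponds to a presheaf morphism $\SigN\ra\SigM$ in the opposite direction, so left-composition at the Jordan level (as in $f\circ e^{t\de_{ia}}$) must be matched with right-composition at the presheaf level, and vice versa. Provided this is handled with care, the theorem reduces cleanly to a combination of Thm.~\ref{Thm_JordanIsosGiveSpecPreshIsosAndViceVersa} and Cor.~\ref{Cor_fvNaHomomIfffCommutesWithOneParamGroupsOfJordanAutoms}, exactly parallelling the proof of Thm.~\ref{Thm_OrientedVMIsCompleteInvariant}.
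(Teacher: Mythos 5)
Your proposal is correct and follows essentially the same route as the paper: the paper's proof also invokes Thm.~\ref{Thm_JordanIsosGiveSpecPreshIsosAndViceVersa} to identify the presheaf isomorphism with a normal Jordan $*$-isomorphism $f$, and then appeals to the discussion preceding the theorem (the $G_{f;a}=H_{f;a}$ equivalence resting on the composition law in $\Pre{\HStone}$ and Cor.~\ref{Cor_fvNaHomomIfffCommutesWithOneParamGroupsOfJordanAutoms}) to equate orientation preservation with commutator preservation. You merely spell out the ``by construction'' step, including the contravariance bookkeeping, more explicitly than the paper does.
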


\begin{proof}
By Thm. \ref{Thm_JordanIsosGiveSpecPreshIsosAndViceVersa}, every isomorphism $\pair{\tphi}{\eta}:\SigN\ra\SigN$ in $\Pre{\HStone}$ corresponds to a unique normal Jordan isomorphism $f:\cM_{\sa}\ra\cN_{\sa}$ and hence to a normal Jordan $*$-isomorphism $f:\cM\ra\cN$. We can write $\pair{\tphi}{\eta}=\pair{\tilde f}{\cG_f}$. A Jordan isomorphism $f$ is a von Neumann isomorphism if and only if $f$ preserves commutators, which by construction is equivalent to $\pair{\tilde f}{\cG_f}$ preserving orientations (in the sense that the diagram \eq{Diag_MorPreservesFlows2} commutes for all $b\in\cN_{\sa}$, $a=f^{-1}(b)$, and all $t\in\bbR$).
\end{proof}

This shows that the spectral presheaf $\SigM$, together with the orientation induced by $\cM$, is a complete invariant of a von Neumann algebra $\cM$. 

\bigskip

\textbf{Acknowledgements.} I thank Chris Isham, Rui Soares Barbosa, Pedro Resende and Jonathon Funk for discussions and Carmen Constantin for feedback. Moreover, I am grateful to Yuri Manin and the Max-Planck-Institute for Mathematics in Bonn for hospitality during a visit in March/April 2013, when some first steps leading tho the present article were taken.

\vspace{0.5cm}

\noindent\textsc{Andreas D\"oring, Institute of Theoretical Physics I,\\
Department of Physics, Universit\"at Erlangen-N\"urnberg,\\
Staudtstra\ss e 7, 91058 Erlangen, Germany}\\
\texttt{andreas.doering@fau.de}

\end{document}